\DeclareFontFamily{OT1}{rsfs}{}
\DeclareFontShape{OT1}{rsfs}{n}{it}{<-> rsfs10}{}
\DeclareMathAlphabet{\mathscr}{OT1}{rsfs}{n}{it}
\newtheorem{theorem}{Theorem}[section]
\newtheorem{lemma}[theorem]{Lemma}
\newtheorem{corol}[theorem]{Corollary}
\newtheorem{prop}[theorem]{Proposition}
\newtheorem{claim}[theorem]{Claim}
\newtheorem{conj}[theorem]{Conjecture}
\theoremstyle{definition} \newtheorem{defin}[theorem]{Definition}}
\theoremstyle{remark} \newtheorem{remark}[theorem]{Remark}
\newtheorem{example}[theorem]{Example}
\newcommand\restr[2]{{
  \left.\kern-\nulldelimiterspace 
  #1 
  \vphantom{\big|} 
  \right|_{#2} 
  }}
\title{K-theoretic defect in Chern class identity for a free divisor}
\author{Xia Liao}
\address{
KIAS
85 Hoegiro, Dongdaemun-gu
Seoul 02455
Republic of Korea
}
\email{liao@kias.re.kr}
\begin{document}
\maketitle

\begin{abstract}
Let $X$ be a nonsingular variety defined over an algebraically closed field of characteristic $0$, and $D$ be a free divisor. We study the motivic Chern class of $D$ in the Grothendieck group of coherent sheaves $G_0(X)$, and another class defined by the sheaf of logarithmic differentials along $D$. We give explicit calculations of the difference of these two classes when: $D$ is a divisor on a nonsingular surface; $D$ is a hyperplane arrangement whose affine cone is free.  
\end{abstract}

\section{introduction}

Let $X$ be a nonsingular complex variety, let $D$ be a free divisor, and let $U = X \smallsetminus D$ be the open complement of $D$ in $X$. The present paper aims to compute the following class
\begin{equation}\label{diff}
mC_{y}( [U\to X] )  - \sum_{p \geq 0}(\Omega^p_X( \log D)y^{p}) \otimes \mathscr{O}_X(-D) 
\end{equation}
in $G_0(X) \otimes \mathbb{Z}[y]$, where $G_0(X)$ denotes the Grothendieck ring of algebraic coherent sheaves on $X$, $mC_y$ is the motivic Chern class transformation (see section \ref{basic}), and $\Omega^p_X( \log D)$ is the sheaf of logarithmic $p$-forms along $D$ (see section \ref{freedivisor}). Here and in the rest of the paper, instead of writing $[\mathscr{F}]$ for the image of a coherent sheaf $\mathscr{F}$ in $G_0(X)$, we will simply write $\mathscr{F}$, for the ease of notation.

The primary motivation for our computation is the following comparison theorem \cite{liao}. 

\begin{theorem}\label{Chernlog}
Let $X$ be a nonsingular algebraic variety defined over an algebraically closed field of characteristic $0$, let $D$ be a free divisor of linear Jacobian type, and let $U = X \smallsetminus D$ be the open complement of $D$ in $X$. We have
\begin{equation}\label{formula}
c_*(1_U) = c(\textup{Der}_X(-\log D))\cap [X]
\end{equation}
in $A_*(X)$.
\end{theorem}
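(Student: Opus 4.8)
\emph{Proof proposal.} The plan is to rewrite each side of \eqref{formula} as an explicit intersection-theoretic expression involving the Segre class of the Jacobian scheme $Y\subseteq X$ of $D$ (the subscheme cut out by the Jacobian ideal $J_D$), and then to match the two expressions using the linear-Jacobian-type hypothesis. Throughout set $n=\dim X$ and $\mathcal{L}=\mathscr{O}_X(D)$; since $\mathscr{O}_X(-D)\subseteq J_D$ there is a canonical inclusion $\mathscr{O}_X\hookrightarrow J_D\otimes\mathcal{L}$.

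For the left-hand side, write $1_U=1_X-1_D$, so that $c_*(1_U)=c(T_X)\cap[X]-c_*(1_D)$ by the normalization property of Chern--Schwartz--MacPherson classes. Then I would invoke Aluffi's formula for the CSM class of a hypersurface,
\[
c_*(1_D)\;=\;c(T_X)\cap\Bigl(s(D,X)+c(\mathcal{L})^{-1}\cap\bigl(s(Y,X)^{\vee}\otimes_X\mathcal{L}\bigr)\Bigr),
\]
where $(-)^{\vee}$ and $\otimes_X\mathcal{L}$ are Aluffi's dual and twist operations on $A_*X$. Combining this with the elementary identity $[X]-s(D,X)=c(\mathcal{L})^{-1}\cap[X]$ (valid because $D$ is a Cartier divisor) gives the compact form
\[
c_*(1_U)\;=\;\frac{c(T_X)}{c(\mathcal{L})}\cap\bigl([X]-s(Y,X)^{\vee}\otimes_X\mathcal{L}\bigr).
\]

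For the right-hand side, freeness of $D$ means $\textup{Der}_X(-\log D)$ is locally free of rank $n$, and the map $\theta\mapsto\theta(f)$ realizes it as the kernel in a short exact sequence
\[
0\longrightarrow\textup{Der}_X(-\log D)\longrightarrow T_X\longrightarrow\bigl(J_D\otimes\mathcal{L}\bigr)/\mathscr{O}_X\longrightarrow 0,
\]
whose cokernel $\mathcal{Q}$ is supported on $D$. Locally, once $\mathcal{L}$ is trivialized and $f$ becomes a function, $\mathcal{Q}$ is simply $J_D/(f)$; globally one must pin down the twist, which is governed by $\mathcal{L}$. Taking total Chern classes in $K_0(X)$, where $[\mathcal{Q}]=(1-[\mathscr{O}_Y])[\mathcal{L}]-1$, one finds $c(\textup{Der}_X(-\log D))=\dfrac{c(T_X)}{c(\mathcal{L})}\,c(\mathscr{O}_Y\otimes\mathcal{L})$, hence
\[
c(\textup{Der}_X(-\log D))\cap[X]\;=\;\frac{c(T_X)}{c(\mathcal{L})}\cap\bigl(c(\mathscr{O}_Y\otimes\mathcal{L})\cap[X]\bigr).
\]

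Comparing the two displayed expressions and cancelling the invertible class $c(T_X)/c(\mathcal{L})$, the theorem becomes equivalent to
\[
c(\mathscr{O}_Y\otimes\mathcal{L})\cap[X]\;=\;[X]-s(Y,X)^{\vee}\otimes_X\mathcal{L},
\]
which, after unwinding Aluffi's twist and dual calculus, amounts to a comparison between the Segre class $s(Y,X)$ and a class determined by the $K$-theory class $[\mathscr{O}_Y]$ (in the untwisted picture, the identity to be proved is $c(\mathscr{O}_Y)\cap[X]=[X]-s(Y,X)^{\vee}$). Such an identity is false for general subschemes --- it already fails for a suitable fat point in a surface --- and establishing it is where I expect the real work to lie: the class built from $[\mathscr{O}_Y]$ is controlled by a locally free presentation $\mathscr{O}_X^{\oplus m}\twoheadrightarrow J_D$, equivalently by $\mathrm{Proj}\,\mathrm{Sym}(J_D)$, whereas the Segre class lives on $\mathrm{Bl}_Y X=\mathrm{Proj}\,\mathrm{Rees}(J_D)$; the hypothesis that $D$ is of linear Jacobian type says precisely that $\mathrm{Rees}(J_D)=\mathrm{Sym}(J_D)$, so that the two computations take place on the same space and can be reconciled term by term. (Here it also helps that, $D$ being free, $\mathscr{O}_X/J_D$ is Cohen--Macaulay of codimension two, so the presenting complex is short and the matching is explicit.) The remaining ingredients --- the bookkeeping of the twist/dual operations and the global identification of the cokernel $\mathcal{Q}$ --- are routine by comparison.
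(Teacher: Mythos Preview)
The paper does not prove Theorem~\ref{Chernlog}: it is quoted from the author's earlier work \cite{liao} and used only as motivation. There is therefore no proof in the present paper to compare your proposal against.

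That said, your outline is very much in the spirit of the argument in \cite{liao}. The reduction you carry out is correct: Aluffi's CSM formula for hypersurfaces gives
\[
c_*(1_U)=\frac{c(T_X)}{c(\mathcal{L})}\cap\bigl([X]-s(Y,X)^{\vee}\otimes_X\mathcal{L}\bigr),
\]
and the freeness of $D$ together with the short exact sequence you wrote yields
\[
c(\textup{Der}_X(-\log D))\cap[X]=\frac{c(T_X)}{c(\mathcal{L})}\cap\bigl(c(\mathscr{O}_Y\otimes\mathcal{L})\cap[X]\bigr),
\]
so everything hinges on your displayed identity relating $c(\mathscr{O}_Y\otimes\mathcal{L})\cap[X]$ to the (dualised, twisted) Segre class of $Y$. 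You are right that this is where the linear Jacobian type hypothesis enters and that it is the heart of the matter: the class on the left is governed by a locally free resolution of $J_D$, hence by $\mathrm{Proj}\,\mathrm{Sym}(J_D)$, while the Segre class lives on $\mathrm{Bl}_YX=\mathrm{Proj}\,\mathrm{Rees}(J_D)$; the hypothesis $\mathrm{Sym}(J_D)\cong\mathrm{Rees}(J_D)$ puts both computations on the same projective bundle over $X$. In \cite{liao} this step is carried out in detail (using, as you note, that freeness forces $\mathscr{O}_X/J_D$ to be Cohen--Macaulay of codimension two, so the presentation is short), and your sketch correctly anticipates both the mechanism and the place where the work lies. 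What remains genuinely to be done in your proposal is exactly what you flag: the explicit bookkeeping on the blowup that turns ``same space'' into ``same class''.
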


In this formula, the left side is the Chern class transformation of the indicator function on $U$ (which sometimes is also called the Chern-Schwartz-MacPherson class of $U$), the right side is the total Chern class of the sheaf of logarithmic derivations along $D$, and $A_*(X)$ is the Chow group of $X$. The formula is in particular true for locally quasi-homogeneous free divisors. See section \ref{freedivisor} for more explanation on this formula.

As will be explained in section \ref{freedivisor}, the difference class \eqref{diff} is a $K$-theoretic lift of the difference $c_*(1_U) - c(\textup{Der}_X(-\log D))\cap [X]$. So Theorem \ref{Chernlog} raised our expectation that \eqref{diff} might be zero for free divisors of linear Jacobian type. If the expectation were true, applying the Todd class transformation would give us an alternative prove of equation \eqref{formula} in $A_*(X) \otimes \mathbb{Q}$. However, it is unfortunately not the case except for SNC divisors. We will compute the class \eqref{diff} when $D$ is a curve on a nonsingular surface in section \ref{cur}, and when $D$ is a projective hyperplane arrangement whose affine cone is free in section \ref{hpa}.

Nevertheless, the difference class \eqref{diff} will represent a vanishing class for free divisors of linear Jacobian type, in the sense that after applying the generalised Todd class transformation $\textup{td}_{(1+y)*}$ to \eqref{diff} and specialising $y$ to $-1$, the class \eqref{diff} becomes $0$ in $A_*(X) \otimes \mathbb{Q}$. This is an easy consequence of theorem \ref{Chernlog} and our discussion in section \ref{freedivisor}.

I want to Thank J\"{o}rg Sch\"{u}rmann for his gracious help in various stages of my preparation of the paper. Some parts of the paper was conceived when I stayed in Oberwolfach in 2015 as Leibniz fellow. I also thank MFO for its hospitality.

\section{Hirzebruch classes of singular spaces}\label{basic}
For a nonsingular complete complex variety $X$, the Hirzebruch $\chi_y$-genus is a polynomial in a parameter $y$ with rational coefficients. Assigning $-1$ to $y$, the value of the polynomial equals the topological Euler characteristic of $X$. Similarly,  assigning $0$ to $y$, the value of the polynomial equals the arithmetic genus of $X$. And moreover, assigning $1$ to $y$, the value of the polynomial thereof equals the signature of the complex manifold $X$.

Recently, the motivic Chern class transformation and the Hirzebruch class transformation were introduced by Brasselet-Schurmann-Yokura. These characteristic class transformations generalise the Hirzebruch $\chi_y$-genus, are applicable to singular spaces, and unify the Chern class transformation theory of MacPherson-Schwartz, the Todd class transformation theory of Baum-Fulton-MacPherson, and the $L$-class transformation theory of Cappell-Shaneson. We will provide here the basic formulation of the Motivic Chern class transformation and Hirzebruch class transformation, in view of our applications in the rest of the paper. For a complete treatment of the theory, the readers are referred to \cite{MR2646988}.

\begin{defin}\cite{MR2059227}
Let $X$ be a complex variety. The relative Grothendieck group of varieties over $X$ is the quotient of the free abelian group generated by all the morphisms $Y \to X$, by the subgroup generated by all the additivity relations

\begin{equation*}
[Y \to X] = [Z \to  X] + [Y \smallsetminus Z \to X]
\end{equation*}  
where $Z$ is a closed subvariety of $Y$. We denote this group by $K_0(var\slash X)$.
\end{defin}

Notice that the construction of the relative Grothenieck group of varieties gives us a functor $K_0(var\slash -)$ in the category of complex varieties. This functor assigns to any $X$ the group $K_0(var\slash X)$, and to any morphism $X_1 \to X_2$, the group homomorphism defined by the natural composition 

\begin{equation*}
[Y \to X_1] \mapsto [Y \to X_1 \to X_2].
\end{equation*}

For any variety $X$,  $G_0(X)$ denotes the Grothendieck group of coherent sheaves on $X$. It is also a functor, but only for proper morphisms. The key of the theory of Hirzebruch class is the existence of the motivic Chern class transformation $mC_y$, from $K_0(var\slash -)$ to $G_0(-)\otimes \mathbb{Z}[y]$. The natural transformation $mC_y$ satisfies the normalisation condition that 

\begin{equation*}
mC_y([X\xrightarrow{id}X]) = \sum_{i=0}^{d} \Omega_X^{i} y^i
\end{equation*}
 for a smooth and purely $d$-dimensional variety $X$.

Let $A_*(-)$ be the Chow functor. Namely, for any $X$, $A_*(X)$ is the Chow group of $X$. The todd class transformation from $G_0(-)$ to $A_*(-)\otimes \mathbb{Q}$ in the singular Riemann-Roch theorem \cite{MR1644323} extends to a natural transformation from $G_0(-)\otimes \mathbb{Z}[y]$ to $A_*(-)\otimes \mathbb{Q}[y]$ by $\textup{td}_*\otimes id_{\mathbb{Z}[y]}$, for which we still write $\textup{td}_*$. The unnormalized Hirzebruch class transformation is then the composition of $mC_y$ and $\textup{td}_*$. To normalise it, we first expand an arbitrary class $\alpha$ in $A_*(X) \otimes \mathbb{Q}[y]$ by its dimension:

\begin{equation*}
\alpha = \sum_{i} \alpha_i
\end{equation*} 
where $\alpha_i \in A(X)_i \otimes \mathbb{Q}[y]$. We then define its normalisation to be 
\begin{equation*}
\sum_{i} \frac{1}{(1+y)^i} \cdot \alpha_i. 
\end{equation*}

We denote the normalised Hirzebruch class transformation by $T_{y*}$. The composition of $\textup{td}_*$ with the normalisation is the generalised Todd class transformation $\textup{td}_{(1+y)*}$, defined by Yokura, used in the singular Riemann-Roch theorem \cite{MR1677403}. Therefore, we have
\begin{equation*}
T_{y*} = \textup{td}_{(1+y)*} \circ mC_y 
\end{equation*}

Fixing a variety $X$, from the procedure we have described, it is clear that $T_{y*}$ takes values in $A_*(X) \otimes \mathbb{Q}[y, (1+y)^{-1}]$. However, one can show that $T_{y*}$ actually takes values in the smaller group $A_*(X) \otimes \mathbb{Q}[y] \subset A_*(X) \otimes \mathbb{Q}[y, (1+y)^{-1}]$. Therefore, it makes sense for us to specialise the $y$ value to $-1$, $0$, and $1$, as we have commented on Hirzebruch $\chi_y$ genus. The result is amazingly elegant. When $y= -1$, the natural transformation $T_y$ factors through the Chern class transformation $c_*$ of Schwartz and MacPherson; when $y=0$, it by construction factors through the Todd class transformation $\textup{td}_*$; when $y=1$ and $X$ is compact, $cl \circ T_{y*}$ likewise factors through the $L$-class transformation of Cappell-Shaneson, where $cl: A_*(X) \to H_{2*}(X)$ is the cycle map from the Chow group to the Borel-Moore homology. Since the rest of the paper concerns little with the factorisation of $T_{y*}$, we are gratified by stating them in the vaguest fashion, leaving the readers to find the precise statements in \cite{MR2646988}. The only fact about factorisation we will use in the sequel is that, in $A_*(X)$ we have
\begin{equation}\label{HirzChern}
T_{-1*}([U \to X]) = c_*(1_U)
\end{equation}
where $U \to X$ is an open embedding and $1_U$ is the function on $X$ such that $1_U(p) = 1$ if $p \in U$ and $1_U(p) = 0$ otherwise.

\section{sheaf of logarithmic derivations}\label{freedivisor}
Let $X$ be a nonsingular variety, let $D$ be a reduced divisor on $X$, and let $i: D \to X$ be the closed embedding. There is a natural morphism of sheaves on $X$:
\begin{equation*}
TX \to i_*\mathscr{O}_D(D)
\end{equation*}
which intuitively is the quotient map from the tangent bundle of $X$ to the normal bundle of $D$. When $D$ is singular, the morphism is not surjective. The cokernel of this morphism is $\mathscr{O}_{D^s}(D)$ where $D^s$ is the singular subscheme of $D$. We define the sheaf of logarithmic derivations along $D$ as the kernel of this morphism, and denote it by $\textup{Der}_X(-\log D)$. If $h$ is a local equation of $D$, then $\textup{Der}_X(-\log D)$ is locally the set of all derivations $\delta$ such that $\delta{h} = 0$. A logarithmic differential $p$-form $\omega$ along $D$ is a rational differential $p$-form such that $h\omega$ and $h\cdot d\omega$ has no poles on local charts. The sheaf of logarithmic $p$-forms along $D$ is denoted by $\Omega_X(\log D)$. It is known that $\textup{Der}_X(-\log D)$ is a reflexive coherent sheaf, dual to $\Omega_X^1(\log D)$. From the definition, we see that the sheaves of logarithmic forms fit into a logarithmic De Rham complex
\begin{equation*}
\Omega_X^{\bullet}(\log D): 0 \to \mathscr{O}_X \to \Omega_X^1(\log D) \to \ldots
\end{equation*}

We say that $D$ is a free divisor if $\textup{Der}_X(-\log D)$ is locally free. In this case, the rank of $\textup{Der}_X(-\log D)$ is the same as the rank of $TX$, which is the dimension of $X$. The morphism $\textup{Der}_X(-\log D) \to TX$ is an injective morphism of sheaves, but not a morphism of vector bundles. 

For free divisors, we have
\begin{equation*}
\Omega_X^p(\log D) \cong \Lambda^p(\Omega_X^1(\log D))
\end{equation*}

A basic reference to logarithmic derivations, logarithmic forms and free divisors is \cite{MR586450}.

\begin{example}\cite{MR586450}
\begin{enumerate}[(i)]
\item Any reduced curve on a nonsingular surface is a free divisor.
\item Let $f$ be a germ of a holomorphic function which has an isolated critical point at the origin. The discriminant in the parameter space of the universal unfolding $f$ is a free divisor germ at the origin.
\end{enumerate}
\end{example}

A divisor $D$ is locally quasi-homogeneous if at any point $p \in D$ we can find a local analytic coordinate chart such that $p$ is the origin and $D$ has a weighted homogeneous local equation. A divisor $D$ is of linear Jacobian type if the Jacobian ideal of $D$ defined by the equation of $D$ and all its partial derivatives (in a coordinate chart) is an ideal of linear type. Namely, the symmetric algebra of the ideal is isomorphic to the Rees algebra of the ideal. It is shown that a locally quasi-homogeneous free divisor is of linear Jacobian type \cite{MR1931962}. Theorem \ref{Chernlog} states that equation \eqref{formula} holds for free divisors of linear Jacobian type.

Recalling equation \eqref{HirzChern}, we can reformulate \eqref{formula} by 
\begin{equation*}
T_{-1*}([U \to X]) = c(\textup{Der}_X(-\log D))\cap [X].
\end{equation*}

This is the primary motivation for our study in this paper. For we naturally ask how is $T_{y}([U \to X])$, or even $mC_y([U \to X])$ related to $\textup{Der}_X(-\log D)$? J\"{o}rg Sch\"{u}rmann pointed out to us that the expected relation must be

\begin{equation}\label{hirz}
mC_{y}( [U\to X] )   = \sum_{p \geq 0}(\Omega^p_X( \log D)y^{p}) \otimes \mathscr{O}_X(-D) \quad \text{in $G_0(X) \otimes \mathbb{Z}[y]$}. 
\end{equation}

There are two justifications for the expectation. First, its truth implies a weaker version of equation \eqref{formula}.

\begin{claim}
For free divisors, formula \eqref{hirz} implies formula \eqref{formula} in $A_*(X) \otimes \mathbb{Q}$.
\end{claim}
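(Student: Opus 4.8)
The plan is to deduce \eqref{formula} by applying the generalised Todd transformation $\textup{td}_{(1+y)*}$ to the conjectural identity \eqref{hirz} and then specialising $y=-1$. Since $T_{y*}=\textup{td}_{(1+y)*}\circ mC_y$, applying $\textup{td}_{(1+y)*}$ turns the left-hand side of \eqref{hirz} into $T_{y*}([U\to X])$, an element of $A_*(X)\otimes\mathbb{Q}[y]$ whose value at $y=-1$ is $c_*(1_U)$ by \eqref{HirzChern}. Thus it is enough to compute $\textup{td}_{(1+y)*}$ applied to the right-hand side of \eqref{hirz}, check that this expression really lies in $A_*(X)\otimes\mathbb{Q}[y]$, evaluate it at $y=-1$, and identify the outcome with $c(\textup{Der}_X(-\log D))\cap[X]$.

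For the computation I set $d=\dim X$ and $E=\Omega^1_X(\log D)$. As $D$ is free, $E$ is locally free of rank $d$, $E^\vee=\textup{Der}_X(-\log D)$, and $\Omega^p_X(\log D)\cong\Lambda^pE$, so the right-hand side of \eqref{hirz} equals $\big(\sum_{p=0}^d\Lambda^pE\cdot y^p\big)\otimes\mathscr{O}_X(-D)$. Since $X$ is smooth, $\textup{td}_*(\mathscr{F})=\textup{ch}(\mathscr{F})\cdot\textup{td}(TX)\cap[X]$; writing $\beta_1,\dots,\beta_d$ for the Chern roots of $E$ and $\alpha_1,\dots,\alpha_d$ for those of $TX$, and using $\textup{ch}\big(\sum_p\Lambda^pE\cdot y^p\big)=\prod_j(1+ye^{\beta_j})$ together with $\textup{ch}(\mathscr{O}_X(-D))=e^{-[D]}$, the unnormalised class is
\[
\prod_{j=1}^d(1+ye^{\beta_j})\cdot e^{-[D]}\cdot\prod_{k=1}^d\frac{\alpha_k}{1-e^{-\alpha_k}}\ \cap\ [X].
\]
Normalising amounts to scaling each degree-one class by $(1+y)$ and dividing the whole expression by $(1+y)^d$, which yields
\[
T_{y*}([U\to X])=\frac{1}{(1+y)^d}\prod_{j=1}^d\big(1+ye^{(1+y)\beta_j}\big)\cdot e^{-(1+y)[D]}\cdot\prod_{k=1}^d\frac{(1+y)\alpha_k}{1-e^{-(1+y)\alpha_k}}\ \cap\ [X].
\]

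The step requiring care is the passage to $y=-1$, since this expression naively carries a $(1+y)^d$ in the denominator and each factor $1+ye^{(1+y)\beta_j}$ naively vanishes there. The point is that $1+ye^{(1+y)\beta_j}=(1+y)+y\big(e^{(1+y)\beta_j}-1\big)$ is divisible by $(1+y)$, with quotient specialising to $1-\beta_j$ at $y=-1$; the $d$ factors of $(1+y)$ so produced cancel the $(1+y)^{-d}$, so $T_{y*}([U\to X])$ genuinely lies in $A_*(X)\otimes\mathbb{Q}[y]$ and we may put $y=-1$. At $y=-1$ the $\mathscr{O}_X(-D)$ factor and the $TX$ factor both reduce to $1$, leaving
\[
c_*(1_U)=T_{-1*}([U\to X])=\prod_{j=1}^d(1-\beta_j)\ \cap\ [X]=c(E^\vee)\cap[X]=c(\textup{Der}_X(-\log D))\cap[X],
\]
where the third equality is the identity $\prod_j(1-\beta_j)=\sum_p(-1)^pc_p(E)=\sum_pc_p(E^\vee)$. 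This is exactly \eqref{formula} in $A_*(X)\otimes\mathbb{Q}$.

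I expect the main obstacle to be purely bookkeeping: one must carry the powers of $(1+y)$ correctly through the normalisation — in particular recognising the factor $(1+y)$ hidden inside each $1+ye^{(1+y)\beta_j}$, without which the specialisation $y=-1$ is either undefined or, if one is careless, spuriously zero — and then keep the duality and sign conventions straight, so that the surviving factor $\prod_j(1-\beta_j)$ is identified as the total Chern class of $\textup{Der}_X(-\log D)=(\Omega^1_X(\log D))^\vee$ rather than that of $\Omega^1_X(\log D)$ itself.
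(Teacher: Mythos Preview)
Your proof is correct and follows essentially the same route as the paper: apply $\textup{td}_{(1+y)*}$ to the right-hand side of \eqref{hirz}, normalise, and let $y\to-1$. The only cosmetic differences are that the paper works with the Chern roots of $\textup{Der}_X(-\log D)$ rather than of its dual $\Omega^1_X(\log D)$ (so its final product reads $\prod(1+\alpha_i)$ instead of your $\prod(1-\beta_j)$), and it invokes L'Hospital's rule where you explicitly factor $(1+y)$ out of each $1+ye^{(1+y)\beta_j}$; these are the same computation in different dress.
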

 
\begin{proof}
Because $X$ is nonsingular, the Todd class tranformation $\textup{td}_*$ is the classical Grothendieck-Riemann-Roch transformation $\textup{ch}(-)\cdot \textup{td}(TX) \cap [X]$. We need to apply this transformation to the right side of \eqref{hirz}, normalise the result, and then plug in $y=-1$. Using the fact that $\Omega^p_X( \log D) \cong \Lambda^{p}\Omega^1_X( \log D)$ for free divisors, we readily get
\begin{equation*}
\textup{ch}\Big(\sum_{p \geq 0}(\Omega^p_X( \log D)y^{p}) \otimes \mathscr{O}_X(-D)\Big) \cdot \textup{td}(TX) = e^{-D}\prod_{i=1}^{d} \frac{(1+ye^{-\alpha_i})\beta_i}{1-e^{-\beta_i}} 
\end{equation*}
where the $\alpha_i$'s are the Chern roots of $\textup{Der}_X(-\log D)$, the $\beta_i$'s are the Chern roots of $TX$ and $d= \dim X$. To normalise this cohomology class, by the procedure described in section \ref{basic}, we only need to multiply $(1+y)^{i-d}$ by its component in $A^{i}(X) \otimes \mathbb{Q}[y]$. Therefore, the normalised class equals
\begin{equation*}
\frac{1}{(1+y)^d} \cdot e^{-(1+y)D} \prod_{i=1}^{d} \frac{(1+ye^{-\alpha_i(1+y)})\beta_i(1+y)}{1-e^{-\beta_i(1+y)}} = e^{-(1+y)D} \prod_{i=1}^{d} \frac{(1+ye^{-\alpha_i(1+y)})\beta_i}{1-e^{-\beta_i(1+y)}}
\end{equation*}
                  
Calculating the limit by L'Hospital's rule for $y \to -1$, we see that the expression is reduced to $\prod (1+\alpha_i)$, which is the total (cohomology) Chern class of $\textup{Der}_X(-\log D)$. The other Chern roots mysteriously disappear at the calculation of the limit.
\end{proof}

Secondly, equation \eqref{hirz} is true for simple normal crossing (SNC) divisors \cite{MR3417881}. In fact, we have an even stronger statement.
 
 \begin{prop}[\cite{MR2784747} exercise $3.10$]
 Let $D$ be a SNC divisor on a nonsingular complex variety $X$, we have a filtered quasi-isomorphism

\begin{equation}\label{snc} 
\underline{\Omega}^{\bullet}_{X,D} \cong \Omega_X^{\bullet}(\log D)\otimes \mathscr{O}_X(-D).
\end{equation}
\end{prop}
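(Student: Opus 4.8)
The plan is to prove the statement locally on $X$ and then conclude by the standard criterion that a morphism of filtered complexes is a filtered quasi-isomorphism if and only if it induces a quasi-isomorphism on each graded piece $\mathrm{gr}^p_F$. First I would unwind the definition of $\underline{\Omega}^\bullet_{X,D}$. Writing $D=\sum_i D_i$ with smooth components $D_i$, $D_I=\bigcap_{i\in I}D_i$, and $a_I\colon D_I\hookrightarrow X$ the inclusions, it is, up to the chosen shift convention (which I take to be the one making the asserted isomorphism degree-preserving), the total complex of the mapping cone of the restriction from $\Omega^\bullet_X$ to the \v{C}ech-type resolution $\bigoplus_{|I|=1}(a_I)_*\Omega^\bullet_{D_I}\to\bigoplus_{|I|=2}(a_I)_*\Omega^\bullet_{D_I}\to\cdots$ of the de Rham complex of $D$ (for $D$ SNC this \v{C}ech complex models the Du Bois complex of $D$), filtered by the b\^{e}te filtration of each constituent. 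Since a local equation of $D$ times a logarithmic $p$-form is regular, there is a natural inclusion of complexes $\Omega^\bullet_X(\log D)\otimes\mathscr{O}_X(-D)\hookrightarrow\Omega^\bullet_X$; I would check that it factors through $\ker\bigl(\Omega^\bullet_X\to\bigoplus_i(a_i)_*\Omega^\bullet_{D_i}\bigr)$, so that composing with the inclusion of the zeroth \v{C}ech column yields a morphism of filtered complexes
\begin{equation*}
\Phi\colon\ \Omega^\bullet_X(\log D)\otimes\mathscr{O}_X(-D)\ \longrightarrow\ \underline{\Omega}^\bullet_{X,D},
\end{equation*}
which is my candidate filtered quasi-isomorphism.

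Passing to $\mathrm{gr}^p_F$ kills the de Rham differential, so $\mathrm{gr}^p_F$ of the source is just $\Omega^p_X(\log D)\otimes\mathscr{O}_X(-D)$ in a single degree, while $\mathrm{gr}^p_F$ of the target is the \v{C}ech-type complex $\bigl[\Omega^p_X\to\bigoplus_i(a_i)_*\Omega^p_{D_i}\to\bigoplus_{i<j}(a_{ij})_*\Omega^p_{D_{ij}}\to\cdots\bigr]$. Hence the proposition is equivalent to the exactness, for each $p$, of
\begin{equation*}
0\to\Omega^p_X(\log D)\otimes\mathscr{O}_X(-D)\to\Omega^p_X\to\bigoplus_i(a_i)_*\Omega^p_{D_i}\to\bigoplus_{i<j}(a_{ij})_*\Omega^p_{D_{ij}}\to\cdots,
\end{equation*}
which I would verify in the \'{e}tale (or analytic) local model $X=\Abb^n$, $D=\{x_1\cdots x_k=0\}$. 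There $\Omega^p_X=\bigoplus_S\mathscr{O}_X\,dx_S$ over $p$-subsets $S$, and the point is that every map in the sequence respects this grading, because restriction to $D_i$ annihilates $dx_S$ when $i\in S$ and is the identity on the coordinate forms when $i\notin S$. Thus the sequence decomposes as the direct sum over $S$ of the augmented \v{C}ech complexes $\bigl[\mathscr{O}_X\to\bigoplus_{i\le k,\,i\notin S}\mathscr{O}_{D_i}\to\cdots\bigr]$ of the subarrangement $\{D_i : i\le k,\ i\notin S\}$, twisted by $dx_S$.

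Each such augmented \v{C}ech complex is exact except at the $\mathscr{O}_X$-term, where its cohomology is $\bigl(\prod_{i\le k,\,i\notin S}x_i\bigr)\mathscr{O}_X\cdot dx_S$; this is the classical fact that for a normal crossing divisor $E=\sum D_i$ the sequence $0\to\mathscr{O}_X(-E)\to\mathscr{O}_X\to\bigoplus_i\mathscr{O}_{D_i}\to\bigoplus_{i<j}\mathscr{O}_{D_{ij}}\to\cdots$ is exact, a Koszul/Mayer--Vietoris computation on the structure sheaf of a normal crossing union. Summing over $S$ and comparing with the elementary description $\Omega^p_X(\log D)\otimes\mathscr{O}_X(-D)=\bigoplus_S\bigl(\prod_{i\le k,\,i\notin S}x_i\bigr)\mathscr{O}_X\cdot dx_S$ (a log $p$-form multiplied by $x_1\cdots x_k$ is regular precisely when the coefficient of $dx_S$ is divisible by the product of those $x_i$, $i\le k$, not occurring in $S$) identifies the cohomology and shows $\mathrm{gr}^p_F(\Phi)$ is a quasi-isomorphism; globalization is automatic since the statement is local on $X$ and all the constructions are functorial in the SNC pair. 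The main friction I anticipate is purely organizational --- keeping the cone/\v{C}ech indexing and the b\^{e}te-filtration shifts consistent so that $\mathrm{gr}^p_F$ of the cone is genuinely the \v{C}ech complex of the $\Omega^p$'s in the degrees matching the source --- since the mathematical core, the exactness of the normal-crossing \v{C}ech resolution, is classical. One can moreover recover the underlying unfiltered quasi-isomorphism topologically, using $\Omega^\bullet_X(\log D)\otimes\mathscr{O}_X(-D)\simeq j_!\Cbb_U$ and the Mayer--Vietoris resolution of $\Cbb_D$, but that argument is blind to the Hodge filtration and so cannot replace the local computation.
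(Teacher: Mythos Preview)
The paper does not actually prove this proposition: it is stated with the attribution ``\cite{MR2784747} exercise $3.10$'' and used without argument. So there is no in-paper proof to compare against.

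That said, your argument is correct and is the standard one. The identification of $\underline{\Omega}^\bullet_D$ for SNC $D$ with the \v{C}ech-type double complex built from the $\Omega^\bullet_{D_I}$, the construction of $\Phi$ via the inclusion $\Omega^\bullet_X(\log D)\otimes\mathscr{O}_X(-D)\hookrightarrow\Omega^\bullet_X$ (which is indeed a subcomplex for the de~Rham differential and lands in the kernel of restriction to each $D_i$), and the reduction to checking $\mathrm{gr}^p_F$ are all accurate. Your local computation is the heart of the matter and is right: in coordinates with $D=\{x_1\cdots x_k=0\}$, the decomposition of $\Omega^p_X$ by the monomial forms $dx_S$ does split the augmented \v{C}ech complex into pieces indexed by $S$, each of which is the Koszul/Mayer--Vietoris resolution of $\mathscr{O}_X$ along the sub-SNC divisor $\sum_{i\le k,\,i\notin S}D_i$, with $H^0$ equal to $(\prod_{i\le k,\,i\notin S}x_i)\,\mathscr{O}_X$; and the identification
\[
\Omega^p_X(\log D)\otimes\mathscr{O}_X(-D)\ =\ \bigoplus_{|S|=p}\Bigl(\textstyle\prod_{i\le k,\,i\notin S}x_i\Bigr)\mathscr{O}_X\cdot dx_S
\]
is exactly the local description of that sheaf. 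Your caveat about bookkeeping (cone conventions and the shift so that $\mathrm{gr}^p_F$ of the target sits in the correct degree) is well placed; once those conventions are fixed at the outset the rest is mechanical. The closing remark that the unfiltered statement can be seen topologically via $j_!\Cbb_U$ is also correct and a nice sanity check, though as you note it does not see the Hodge filtration.
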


The left side of the quasi-isomorphism is the Du Bois complex of the pari $(X, D)$. It is equipped with the Hodge filtration. The complex $\Omega_X^{\bullet}(\log D)\otimes \mathscr{O}_X(-D)$ is equipped with the trivial filtration. Therefore, another way to express \eqref{snc} is 
\begin{equation*}
Gr_F^{p}(\underline{\Omega}^{\bullet}_{X,D})[p] \cong \Omega_X^{p}(\log D)\otimes \mathscr{O}_X(-D). 
\end{equation*}  

It is shown in \cite{MR3417881} section $2.1.2$ that 
\begin{equation}\label{dubois}
mC_y([U \to X]) = \sum_{p \geq 0}Gr_F^{p}(\underline{\Omega}^{\bullet}_{X,D})[p]\cdot y^p
\end{equation}
for any closed subvariety $D$ and its complement $U$ in $X$. (For a bounded complex of coherent sheaves $\mathscr{F}^{\bullet}$, its class in $G_0(X)$ is define to be $\sum_i(-1)^i[\mathscr{F}^i]$. With our usual abbreviation of brackets, here and in the future $Gr_F^{p}(\underline{\Omega}^{\bullet}_{X,D})[p] $ means the class of this complex in $G_0(X)$.) Combining \eqref{snc} and \eqref{dubois}, we see that \eqref{hirz} is true for SNC divisors.

Can equation \eqref{hirz} be true in grearter generality? In particular, if it were true for locally quasi-homogeneous free divisors, it would be a nice generalisation of theorem \ref{Chernlog}. Unfortunately, we will see this expectation fails even in very simple cases. In the next two sections, we will calculate the difference class \eqref{diff} in explicit forms when $D$ is a singular curve on a surface and when $D$ is a projective hyperplane arrangement whose affine cone is free. On the other hand, if equation \eqref{hirz} were true for a certain class of free divisors, it must at least be true for the parameter $y=0$, namely, 
\begin{equation*}
mC_0([U \to X]) = \mathscr{O}_X(-D).
\end{equation*}

Using \eqref{dubois}, this means that
\begin{equation}\label{dzero}
Gr_F^{0}(\underline{\Omega}^{\bullet}_{X,D}) =  \mathscr{O}_X(-D)
\end{equation}
in $G_0(X)$.

It is also known that for any $p$, the complex $Gr_F^{p}(\underline{\Omega}^{\bullet}_{X,D})[p]$ fits into an exact triangle (\cite{MR2784747} equation ($3.9.1$), or \cite{MR3417881} section $2.1.2$)
\begin{equation*}
\underline{\underline{\Omega}}_{X,D}^p \to \Omega_{X}^p \to i_*\underline{\underline{\Omega}}_{D}^p \xrightarrow{+1}
\end{equation*}
in $D^b_{coh}(X)$, where $\underline{\underline{\Omega}}_{X,D}^p$ stands for $Gr_F^{p}(\underline{\Omega}^{\bullet}_{X,D})[p]$. The meanings of other notations are that, $i: D \to X$ is the inclusion of $D$ into $X$ and $\underline{\underline{\Omega}}_{D}^p = Gr_F^{p}(\underline{\Omega}_{D}^{\bullet})[p]$ where $\underline{\Omega}_{D}^{\bullet}$ is the Du bois complex of $D$ equipped also with the Hodge filtration $F$. In particular, when $p=0$, we have the exact triangle
\begin{equation*}
Gr_F^{0}(\underline{\Omega}^{\bullet}_{X,D}) \to \mathscr{O}_X \to i_*\underline{\underline{\Omega}}_{D}^0 \xrightarrow{+1},
\end{equation*}
which implies
\begin{equation*}
\begin{split}
i_*\underline{\underline{\Omega}}_{D}^0 &=  \mathscr{O}_X - Gr_F^{0}(\underline{\Omega}^{\bullet}_{X,D}) \\
&= \mathscr{O}_X - \mathscr{O}_X(-D) \\
&= i_*\mathscr{O}_D
\end{split}
\end{equation*}
in $G_0(X)$.

\begin{defin}(\cite{MR2796408} definition $4.3$)
A variety $X$ has Du Bois singularities if the natural morphism $\mathscr{O}_X \to Gr_F^{0}(\underline{\Omega}^{\bullet}_{X})$ is a quasi-isomorphism.
\end{defin}

Even though we don't know how to characterise reduced divisors satisfying equation \eqref{dzero}, we see at least from previous definition that hypersurfaces having only Du Bois singularities are a natural set of objects satisfying equation \eqref{dzero}. Can we find reduced hypersurfaces which are free and have only Du Bois singularities but are not normal crossing? The following result indicates that our pursuit is most likely futile.
\begin{prop}
If a reduced hypersurface is free and have only Du Bois singularities, then it is normal crossing in codimension 1.  
\end{prop}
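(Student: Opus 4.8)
The plan is to localize at a generic point of the singular locus and classify the possible singularity types there. Since being normal crossing in codimension $1$ is a condition only at the points of $D$ of codimension $\le 1$, and $D$ is smooth at its generic points, it is enough to analyze $D$ at the generic point $\xi$ of an arbitrary irreducible component $Z$ of $\operatorname{Sing}(D)$ of codimension $1$ in $D$; if there is no such component there is nothing to prove. At such a point $\mathcal{O}_{X,\xi}$ is a regular local ring of dimension $2$, so by the Cohen structure theorem $\widehat{\mathcal{O}_{X,\xi}}\cong K[[u,v]]$ where $K=\kappa(\xi)$ is a field of characteristic $0$, and $D$ is cut out in this ring by a single element $h$. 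Since $D$ is reduced and $\mathcal{O}_{X,\xi}$ is excellent, $h$ is squarefree; since $\xi$ is a singular point of $D$, the order of $h$ is at least $2$. Thus $R:=\widehat{\mathcal{O}_{D,\xi}}=K[[u,v]]/(h)$ is a reduced, one-dimensional, non-regular complete local ring of embedding dimension at most $2$ --- a plane curve singularity over $K$.

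The engine of the proof is the theorem that Du Bois singularities are seminormal (Saito; see also the accounts in Koll\'ar's book or Kov\'acs--Schwede). Seminormality is a local property of rings and is preserved under completion of excellent local rings (Greco--Traverso), so $R$ is seminormal, and hence so is $R\otimes_K\overline{K}$ (characteristic $0$ is used here, and regularity descends along the faithfully flat extension $R\to R\otimes_K\overline{K}$, so $R\otimes_K\overline{K}$ is still singular). Now one invokes the classification of seminormal curve singularities: over an algebraically closed field, a reduced seminormal curve singularity with $r$ analytic branches is formally isomorphic to the union of the $r$ coordinate axes in $\mathbb{A}^{r}$, and in particular has embedding dimension exactly $r$; equivalently, a reduced plane curve germ is seminormal precisely when it is smooth or an ordinary node. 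Because $\operatorname{embdim}(R)\le 2$, the singularity $R\otimes_K\overline{K}$ has at most two branches, and it has at least two because it is singular; therefore $R\otimes_K\overline{K}\cong\overline{K}[[u,v]]/(uv)$. By Artin approximation this means that in a suitable \'etale neighborhood of $\xi$ in $X$ the divisor $D$ is defined by $x_1x_2=0$. As $\xi$ ranges over the generic points of the codimension-$1$ components of $\operatorname{Sing}(D)$, this says exactly that $D$ is normal crossing in codimension $1$.

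The delicate points, and the main obstacle, are precisely the descent of the hypothesis to the completed local ring over the non-closed field $K$: one must know that Du Bois implies seminormal in characteristic $0$, that seminormality localizes and survives completion for excellent local rings, and that the structure theorem for seminormal curve singularities is available over a non-closed field (allowing branches conjugate over $K$, which geometrically are still nodes and hence still normal crossing \'etale-locally). The field-theoretic wrinkle can be removed altogether by slicing instead: cutting $X$ with $\dim X-2$ general hyperplanes produces a nonsingular surface $S$ meeting $Z$ transversally in finitely many closed points, $D\cap S$ is again a reduced Du Bois --- hence seminormal --- curve on $S$ whose singular locus contains $Z\cap S$, so over the algebraically closed base field those points are nodes by the plane-curve classification, and transversality transports the conclusion back to the generic point of $Z$. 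Finally, the freeness of $D$ does not seem essential to this argument; what it supplies, via the Aleksandrov--Terao criterion, is that the singular subscheme $D^{s}$ is Cohen--Macaulay of pure codimension $2$, so that $\operatorname{Sing}(D)$ has no components of codimension greater than $1$ in $D$ and the phrase ``in codimension $1$'' already accounts for all of it.
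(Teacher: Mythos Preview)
Your argument is correct and shares with the paper the decisive first step: Du Bois singularities are seminormal. From there the two proofs diverge. The paper stays global and quotes a theorem of Adkins--Andreotti--Leahy: a weakly normal (equivalently, in characteristic $0$, seminormal) hypersurface has, away from a codimension-$2$ subset, only ``multi-crossing'' singularities, and on a hypersurface these are forced to be double points. You instead localize at the generic point of a codimension-$1$ component of $\operatorname{Sing}(D)$, complete, and invoke the classification of seminormal one-dimensional local rings (the $r$-axes picture), using the embedding-dimension bound coming from the hypersurface hypothesis to pin down $r=2$. In effect you are reproving, in the special case needed, the content of the Adkins--Andreotti--Leahy result that the paper cites.

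What each approach buys: the paper's route is a clean chain of citations with no local analysis, but it imports a somewhat specialized reference. Your route is more self-contained and makes transparent exactly why ``hypersurface'' enters (via the embedding-dimension bound), at the cost of the field-theoretic wrinkle you flag---seminormality under completion (Greco--Traverso) and under base change to $\overline{K}$ in characteristic $0$---or, in your alternative ending, a Bertini-type statement for Du Bois (or for seminormality) to justify that the general surface slice $D\cap S$ inherits the hypothesis. Both of these auxiliary facts are available in the literature, so neither is a gap; they are just extra moving parts the paper avoids by citing the hypersurface result directly.

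Your closing observation that freeness plays no role is accurate and matches the paper: the paper's proof also uses only that $D$ is a reduced hypersurface with Du Bois singularities.
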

\begin{proof}
A complex analytic space is normal crossing in codimension $1$ if away from a codimension $2$ subset the space has only normal crossing singularities. Now the proof of the proposition follows from a sequence of easy implications found in the literatures.
\begin{enumerate}[(i)]
\item Du Bois singularities are seminormal (\cite{MR2796408} section $7$). 
\item For algebraic varieties defined over an algebraically closed field of characteristic $0$, seminormality is equivalent to weak normality (\cite{MR632650} proposition $2.3$). The definition of weak normality and seminormality are both given in \cite{MR632650} section $2$.
\item The definition for seminormality used in \cite{MR2796408} is different from the one used in \cite{MR632650}. Their equivalence can be found in \cite{MR618807} proposition $1.3$. 
\item For hypersurfaces, weak normality implies that away from a codimension $2$ analytic subset the variety has only multi-crossing singularities (\cite{MR0463484} theorem $2$). On hypersurfaces, these multi-crossing singularities can only appear at the transversal intersection of two smooth components. This follows easily from the description of the space $V_{(n,m)}$ in \cite{MR0463484} section $1$.
\end{enumerate}
\end{proof}
We finally state Faber's conjecture (\cite{MR3319556} Question $5$).
\begin{conj}
A free divisor has only normal crossing singularities if and only if it is normal crossing in codimension $1$.
\end{conj}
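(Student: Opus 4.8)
The claimed equivalence has a trivial direction and a substantial one. If $D$ has only normal crossing singularities it is tautologically normal crossing in codimension $1$ (take the empty set as exceptional locus). For the converse the statement is local in the analytic topology, so the plan is to fix a germ $(D,0) \subset (\mathbb{C}^n,0)$ of a free divisor which is normal crossing away from a closed analytic set $Z$ of codimension $\geq 2$ in $D$ — equivalently of codimension $\geq 3$ in $X$ — and to prove that $(D,0)$ is analytically isomorphic to a union of at most $n$ coordinate hyperplanes. I would proceed by induction on $n$; for $n \leq 2$ there is nothing to prove, since then $Z$ is forced to be empty, so assume $n \geq 3$.

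Decompose the germ into irreducible components $D = D_1 \cup \cdots \cup D_k$. Because $D \smallsetminus Z$ is a normal crossing divisor, each $D_i \smallsetminus Z$ is one of its smooth components, so $\textup{Sing}(D_i) \subseteq Z$ has codimension $\geq 2$ in $D_i$, and each $D_i$ is therefore normal. The first aim is to upgrade ``normal'' to ``smooth''. Here I would exploit two consequences of freeness: first, that the singular subscheme of a free divisor is Cohen--Macaulay of codimension $2$ in $X$, so $\textup{Sing}(D)$ is pure of codimension $1$ in $D$ and its generic points are honest loci along which, by hypothesis, $D$ is the transverse union of two smooth branches; second, Saito's criterion, which supplies a basis $\delta_1, \ldots, \delta_n$ of $\textup{Der}_X(-\log D)_0$ whose coefficient determinant is a unit multiple of the defining equation of $D$. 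Along the generic point of each codimension-$1$ stratum the $\delta_i$ are forced into the standard shape $x_1\partial_1,\, x_2\partial_2,\, \partial_3, \ldots, \partial_n$, and the idea is to propagate this shape toward the origin: produce enough logarithmic vector fields non-vanishing at $0$ to invoke K.~Saito's splitting criterion — a free divisor germ splits off a smooth factor as soon as some logarithmic vector field has nonzero value at $0$ — then peel off the smooth factors and descend in dimension until only a genuine coordinate cross remains, closing the induction.

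The main obstacle is precisely this propagation: a priori a free divisor can be generically normal crossing yet develop an exotic singularity along the codimension-$\geq 3$ set $Z$, and it is not clear that the logarithmic vector fields, so well understood in codimension $1$, stay abundant near $0$; forcing them to be is where the freeness hypothesis must do the real work, e.g. by bounding the orders of the $\delta_i$ through Saito's criterion, or by using the unmixedness of the Cohen--Macaulay singular scheme to restrict how branches may come together. A reasonable warm-up, closer to the methods already in this paper, is the assertion that a free divisor with Du Bois (equivalently, by Saito's theorem, log canonical) singularities is normal crossing: this is a consequence of the Proposition above together with the conjecture, so proving it directly — perhaps via multiplier ideals and the special Jacobian ideal of a free divisor — would be a strong consistency check. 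Ultimately I expect a complete proof to require a new structural result asserting that the logarithmic stratification of a free divisor which is normal crossing in codimension $1$ consists of ``coordinate'' strata in every codimension and not merely in codimension $1$; obtaining that appears to lie beyond the tools surveyed here, which is no doubt why the statement is recorded only as a conjecture.
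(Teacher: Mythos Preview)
The statement you are addressing is recorded in the paper as a \emph{conjecture} (Faber's conjecture), not as a theorem, and the paper offers no proof whatsoever: immediately after stating it, the author only remarks that ``if the conjecture were true'' a certain consequence would follow, and notes that it has been established in the special case of locally quasi-homogeneous free divisors (citing \cite{sl}). There is therefore no ``paper's own proof'' against which to compare your proposal.

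Your write-up is, accordingly, not a proof either, and you are candid about this: you lay out a plausible inductive strategy (split off smooth factors via Saito's criterion whenever a logarithmic vector field is nonvanishing at the origin, use Cohen--Macaulayness of the singular scheme to control codimension-$1$ behaviour), but you correctly identify the genuine gap --- propagating the normal-crossing structure from the generic codimension-$1$ stratum through the codimension-$\geq 3$ locus $Z$ to the origin --- and you explicitly concede that closing it ``appears to lie beyond the tools surveyed here, which is no doubt why the statement is recorded only as a conjecture.'' That assessment matches the paper's own stance. If you want to sharpen the proposal, the one concrete handle the paper points to is the locally quasi-homogeneous case settled in \cite{sl}; examining how quasi-homogeneity is used there to force the splitting would clarify exactly which extra ingredient is missing in the general free case.
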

If the conjecture were true, we would have no chance of finding free and Du Bois divisors other than normal crossing divisors. The conjecture was proved for locally quasi-homogeneous free divisors \cite{sl}.

\section{motivic Chern classes of curves}\label{cur}
Let $X$ be a nonsingular complex surface, and let $D$ be a reduced curve on $X$. We first aim to write down an explicit expression of $mC_{y}( [U\to X]$. As the readers will see, the calculations are local in nature. Without loss of generality, we may assume $D$ has only one singularity at $x$.

Let $\tilde{D}$ be the normalisation of $D$. We have the following diagram of resolution of singularity.

\begin{displaymath}
\xymatrix{ E  \ar[r] \ar[d] & \tilde{D} \ar[d]^{\pi} \\
                 x  \ar[r]^{i} & D }
\end{displaymath}
where $E = \pi^{-1}(x)$.

In $K_0(var \slash D)$, we have
\begin{equation*}
\begin{split}
[D \to D] &= [x \to D] + [\tilde{D} \to D] - [E \to D] \\
              &= i_*[x \to x] + \pi_*[\tilde{D} \to \tilde{D}] - \pi_*[E \to E].
\end{split}
\end{equation*}

Consequently 
\begin{equation*}
\begin{split}
mC_y([D \to D]) &= i_*mC_y([x \to x]) + \pi_*mC_y([\tilde{D} \to \tilde{D}]) -\pi_*mC_y[E \to E] \quad\text{by functoriality of $mC_y$} \\
                          &= \mathscr{O}_x + \pi_*\mathscr{O}_{\tilde{D}} - \pi_*(\Omega_{\tilde{D}}^{1})y - \pi_*\mathscr{O}_E. \quad \text{by normalisation property of $mC_y$}
\end{split}
\end{equation*}
Note that because $\pi$ is a finite morphism, no higher direct image is needed in the expression above. We also have omitted, and will tacitly omit writing all push forward operators along closed embeddings.

We then have 
\begin{equation}\label{mc}
\begin{split}
mC_y([U \to X]) =& mC_y([X \to X]) - mC_y([D \to X]) \\
                          =& \sum_p \Omega_X^{p} y^p - mC_y([D \to X]) \\
                          =& (\mathscr{O}_X - \pi_*\mathscr{O}_{\tilde{D}} - \mathscr{O}_x + \pi_*\mathscr{O}_E) + (\Omega_X^1 - \pi_*\Omega_{\tilde{D}}^1)y + \Omega_X^2y^2
\end{split}
\end{equation}

We can also obtain this equation by using the Du Bois complex associated to the resolution diagram above. The Du Bois complex of $D$ is (\cite{MR2796408} equation ($4.2.5$))
\begin{equation*}
\underline{\Omega}_D^{\bullet}: 0 \to \pi_*\mathscr{O}_{\tilde{D}} \oplus \mathscr{O}_x \to \pi_*\Omega_{\tilde{D}}^1 \oplus \pi_*\mathscr{O}_E \to 0. 
\end{equation*}

There is a natural morphism of complexes $f: \Omega_X^{\bullet} \to \underline{\Omega}_D^{\bullet}$. The Du Bois complex of the pair $(X, D)$ is $cone(f)[-1]$, the mapping cone of $f$ shifted by $-1$ (\cite{MR2784747} definition $3.9$). More explicitly, 
\begin{equation*}
\underline{\Omega}_{X,D}^{\bullet}: 0 \to \mathscr{O}_X \to \Omega_X^1 \oplus \pi_*\mathscr{O}_{\tilde{D}} \oplus \mathscr{O}_x \to \Omega_X^2 \oplus \pi_*\Omega_{\tilde{D}}^1 \oplus \pi_*\mathscr{O}_E \to 0.
\end{equation*} 

The graded quotients are
\begin{equation*}
\begin{split}
Gr_F^0(\underline{\Omega}_{X,D}^{\bullet}) &: 0 \to \mathscr{O}_X \to \pi_*\mathscr{O}_{\tilde{D}} \oplus \mathscr{O}_x \to \pi_*\mathscr{O}_E \to 0 \\
Gr_F^1(\underline{\Omega}_{X,D}^{\bullet})[1] &: 0 \to \Omega_X^1 \to \pi_*\Omega_{\tilde{D}} \to 0 \\
Gr_F^2(\underline{\Omega}_{X,D}^{\bullet})[2] &: 0 \to \Omega_X^2 \to 0
\end{split}
\end{equation*}
where the first non zero terms of all complexes all sit in degree $0$. Using \eqref{dubois}, we again get \eqref{mc} the expression for $mC_y([U \to X])$.

Now the difference class \eqref{diff} can be written as 
\begin{equation*}
\begin{split}
& mC_{y}( [U\to X] )  - \sum_{p \geq 0}(\Omega^p_X( \log D)y^{p}) \otimes \mathscr{O}(-D) \\
=& (\mathscr{O}_X - \pi_*\mathscr{O}_{\tilde{D}} - \mathscr{O}_x + \pi_*\mathscr{O}_E - \mathscr{O}_X(-D)) + \\
& \big(\Omega_X^1 - \pi_*\Omega_{\tilde{D}}^1 - \Omega_X^1(\log D)(-D)\big)y + \\ 
& \big(\Omega_X^2 - \Omega_X^2(\log D)(-D)\big)y^2.
\end{split}
\end{equation*}

We have 
\begin{equation*}
\begin{split}
\mathscr{O}_X - \pi_*\mathscr{O}_{\tilde{D}} - \mathscr{O}_x + \pi_*\mathscr{O}_E - \mathscr{O}_X(-D)
=& \mathscr{O}_{D} - \pi_*\mathscr{O}_{\tilde{D}} - \mathscr{O}_x + \pi_*\mathscr{O}_E \\
=& (-\delta + i -1)\mathscr{O}_x
\end{split}
\end{equation*}
where $\delta = \text{length}(\mathscr{O}_{\tilde{D}}\slash \mathscr{O}_D)$ is the $\delta$-invariant of the singularity $x$ and $i = \text{length}(\mathscr{O}_E) = \# (\pi^{-1}(x))$ is the number of the branches ramified at $x$.

Quite generally, for a free divisor $D$ on a nonsingular $X$ of dimension $d$, we have (see section \ref{freedivisor})
\begin{equation*}
\Omega_X^d(D) \cong \Omega_X^d(\log D).
\end{equation*} 
Because any reduced divisor on a surface is free, we see that the coefficient for the $y^2$ is $0$.

To calculate the coefficient for $y$, we use the logarithmic residue sequence
\begin{equation*}
0 \to \Omega_X^1 \to \Omega_X^1(\log D) \to R_D \to 0
\end{equation*}
where $R_D$ is the logarithmic residue, and the standard sequences
\begin{equation*}
0 \to \Omega_X^1(-D) \to \Omega_X^1 \to \Omega_X^1 \otimes \mathscr{O}_D \to 0,
\end{equation*}
\begin{equation*}
0 \to \mathscr{O}_D(-D) \to \Omega_X^1 \otimes \mathscr{O}_D \to \Omega_D^1 \to 0.
\end{equation*}

Note that for any closed embedding $Y \to X$ of complex analytic spaces defined by a coherent sheaf $\mathscr{I}$, $\mathscr{I}/\mathscr{I}^2 \to \Omega^{1}_{X} \otimes \mathscr{O}_Y$ is in general not injective. Here the injectivity of $\mathscr{O}_D(-D) \to \Omega_X^1 \otimes \mathscr{O}_D$ is due to the isolated singularities on $D$, and can be verified by a local computation as follows. Let $f \in \mathscr{O}_{X,x}$ be a local equation of $D$ in the neighbourhood of the singular point $x$, and let $z_1,z_2$ be local analytic coordinates on $X$. Locally, the morphism $\mathscr{O}_D(-D) \to \Omega_X^1 \otimes \mathscr{O}_D$ takes the form $g \mapsto (g\partial_1f, g\partial_2f)$ where $g \in \mathscr{O}_{D,x}$. Let $\tilde{g}$ be a lift of $g$ in $\mathscr{O}_{X,x}$. If $g\partial_1f = g\partial_2f = 0$ in $\mathscr{O}_{D,x}$, then both $\tilde{g}\partial_1f$ and $\tilde{g}\partial_2f$ are contained in $(f)$. Using the fact that $\mathscr{O}_{X,x}$ is a UFD, and denoting the gcd of $\partial_1f, \partial_2f, f$ by $h$, we conclude that $\tilde{g}h \in (f)$. However, if $h$ were not a unit in $\mathscr{O}_{X,x}$, $D$ would not have isolated singularities. Hence $\tilde{g} \in {f}$ and $g = 0$.

We then obtain
\begin{equation*}
\begin{split}
\Omega_X^1 - \pi_*\Omega_{\tilde{D}}^1 - \Omega_X^1(\log D)(-D) &= \Omega_X^1 - \pi_*\Omega_{\tilde{D}}^1 - \Omega_X^1(-D) - R_D(-D) \\
&= \Omega_X^1 \otimes \mathscr{O}_D - \pi_*\Omega_{\tilde{D}}^1 - R_D(-D) \\
&= (\Omega_D^1 - \pi_*\Omega_{\tilde{D}}^1) + \big(\mathscr{O}_D(-D) - R_D(-D)\big)
\end{split}
\end{equation*}

There are natural morphisms $f: \Omega_D^1 \to \pi_*\Omega_{\tilde{D}}^1$ and $\mathscr{O}_D \to R_D$, which are isomorphisms away from the singular point $x$. By \cite{MR3260143} corollary 3.6 (or \cite{MR3319132} proposition $4.10$), $\text{dim}_{\mathbb{C}}(R_D\slash \mathscr{O}_D)=\tau$, the Tjurina number of the singularity $x$. Let us temporarily write $\alpha$ for the number $\text{dim}_{\mathbb{C}}(\text{ker}(f))-\text{dim}_{\mathbb{C}}(\text{coker}(f))$. Hence the coefficient for $y$ is $(-\tau + \alpha)\mathscr{O}_x \in G_0(X)$. 

Putting these together, we now prove the following 
\begin{theorem}
 Let $X$ be a nonsingular complex surface, let $D$ be a reduced divisor on $X$, and let $U = X \smallsetminus D$ be the open complement of $D$ in $X$. Then the different class \eqref{diff} has the expression
\begin{equation*}
\sum_{x}\Big((-\delta_x + i_x -1)\mathscr{O}_x + (-\tau_x + \delta_x)\mathscr{O}_x \cdot y\Big)
\end{equation*}
in $G_0(X) \otimes \mathbb{Z}[y]$, where the sum is taken over all singular points on $D$.
\end{theorem}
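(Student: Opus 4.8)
The proof is essentially the bookkeeping of the computation carried out just above, supplemented by one nontrivial local identity. Since the difference class is supported on the finite singular locus of $D$ and each of its skyscraper coefficients is computed by a purely local analysis, it suffices to treat a single singular point $x$, as was already arranged above. The coefficient of $y^2$ vanishes because $\Omega^2_X(\log D)\cong\Omega^2_X(D)$ for the free divisor $D$, so $\Omega^2_X-\Omega^2_X(\log D)\otimes\mathscr O_X(-D)=0$. The coefficient of $y^0$ is $(-\delta_x+i_x-1)\mathscr O_x$: rewrite $\mathscr O_X-\mathscr O_X(-D)=\mathscr O_D$ and use $\pi_*\mathscr O_{\tilde D}-\mathscr O_D=\delta_x\,\mathscr O_x$ in $G_0(X)$ together with $\pi_*\mathscr O_E=i_x\,\mathscr O_x$, exactly as displayed above. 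Finally, the coefficient of $y$ was reduced above to $(-\tau_x+\alpha_x)\mathscr O_x$, where $\alpha_x=\dim_{\mathbb C}\ker f-\dim_{\mathbb C}\operatorname{coker} f$ for the natural map $f\colon\Omega^1_D\to\pi_*\Omega^1_{\tilde D}$. Hence the entire theorem comes down to the single identity $\alpha_x=\delta_x$.

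To prove $\alpha_x=\delta_x$ I would factor $f$ through the dualizing sheaf $\omega_D$ of $D$, which is invertible because the hypersurface $D$ is Gorenstein. On the smooth curve $\tilde D$ one has $\Omega^1_{\tilde D}=\omega_{\tilde D}$, and $\pi_*\omega_{\tilde D}$ embeds in $\omega_D$ with cokernel a length-$\delta_x$ skyscraper; this is the usual conductor statement, obtained by dualising the normalisation sequence. Let $c\colon\Omega^1_D\to\omega_D$ be the resulting composite, the fundamental-class morphism. Since $\pi_*\omega_{\tilde D}$ and $\omega_D$ are torsion-free, $\ker f=\ker c$ is exactly the torsion submodule $\mathcal T(\Omega^1_D)$, and $\dim_{\mathbb C}\operatorname{coker} f=\dim_{\mathbb C}\operatorname{coker} c-\delta_x$; therefore $\alpha_x=\dim_{\mathbb C}\mathcal T(\Omega^1_D)-\dim_{\mathbb C}\operatorname{coker} c+\delta_x$, and it is enough to show $\dim_{\mathbb C}\mathcal T(\Omega^1_D)=\dim_{\mathbb C}\operatorname{coker} c=\tau_x$. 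Both are read off the conormal presentation $0\to\mathscr O_D(-D)\xrightarrow{dh}\Omega^1_X\otimes\mathscr O_D\to\Omega^1_D\to 0$ ($h$ a local equation of $D$), whose left exactness is precisely the injectivity established above from the isolated singularities of $D$; so this is a locally free resolution of $\Omega^1_D$ of length one. Trivialising $\omega_D$ near $x$ by the Poincaré residue of $(dz_1\wedge dz_2)/h$, the map $c$ is induced by the cofactor homomorphism $\Omega^1_X\otimes\mathscr O_D\to\mathscr O_D$, $dz_1\mapsto\partial_2 h$, $dz_2\mapsto-\partial_1 h$, so $\operatorname{coker} c\cong\mathscr O_{X,x}/(h,\partial_1 h,\partial_2 h)$ is the Tjurina algebra, of dimension $\tau_x$.

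It remains to identify $\dim_{\mathbb C}\mathcal T(\Omega^1_D)$ with $\tau_x$ as well, and this is the technical heart. Dualising the length-one resolution over $\mathscr O_{D,x}$ gives $\operatorname{Ext}^1_{\mathscr O_{D,x}}(\Omega^1_D,\mathscr O_{D,x})\cong\mathscr O_{D,x}/(\partial_1 h,\partial_2 h)$, again the Tjurina algebra. On the other hand $\mathcal T(\Omega^1_D)$ is the local cohomology $H^0_{\mathfrak m_x}(\Omega^1_D)$, which by local duality on the one-dimensional Cohen--Macaulay local ring $\mathscr O_{D,x}$ is Matlis dual to $\operatorname{Ext}^1_{\mathscr O_{D,x}}(\Omega^1_D,\omega_{D,x})$; and since $\omega_{D,x}$ is invertible, tensoring by it preserves lengths, so $\dim_{\mathbb C}\mathcal T(\Omega^1_D)=\dim_{\mathbb C}\operatorname{Ext}^1_{\mathscr O_{D,x}}(\Omega^1_D,\mathscr O_{D,x})=\tau_x$. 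Substituting back gives $\alpha_x=\tau_x-\tau_x+\delta_x=\delta_x$, which finishes the proof. The step I expect to demand the most care is exactly this last duality argument; if one prefers to avoid it, the equality $\alpha_x=\delta_x$ — equivalently $\dim_{\mathbb C}\mathcal T(\Omega^1_D)=\tau_x$ — may instead be quoted from the literature on logarithmic residues of plane curve singularities, or verified directly by the explicit parametrisation used for the node and the cusp.
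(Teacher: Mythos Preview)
Your argument is correct, and it takes a genuinely different route from the paper's. Both proofs first reduce to the single local identity $\alpha_x=\delta_x$, but from there they diverge. The paper proceeds \emph{indirectly}: it applies the generalised Todd transformation $\textup{td}_{(1+y)*}$ to the difference class, sets $y=-1$, and obtains
\[
c_*(1_U)-c(\textup{Der}_X(-\log D))=\sum_x(\tau_x-\delta_x-\alpha_x+i_x-1)[x].
\]
It then quotes an external result (\cite{MR2928937}, Corollary~3.2) giving the same difference as $\sum_x(\tau_x-\mu_x)[x]$, and finishes with Milnor's formula $\mu_x=2\delta_x-i_x+1$. Your proof, by contrast, is a \emph{direct} commutative-algebra computation at the singular point: factor $f$ through the fundamental-class map $c\colon\Omega^1_D\to\omega_D$, compute $\operatorname{coker} c$ as the Tjurina algebra from the cofactor description, and then identify $\dim_{\mathbb C}\mathcal T(\Omega^1_D)$ with $\tau_x$ via local duality on the one-dimensional Gorenstein ring $\mathscr O_{D,x}$. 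Each step is standard and the arithmetic $\alpha_x=\tau_x-\tau_x+\delta_x$ is clean. What your approach buys is self-containment: you avoid the dependence on \cite{MR2928937} and on Milnor's formula. This is precisely the ``direct computation in the local ring'' that the paper flags as desirable but missing in its Remark~(ii) after the theorem; you have supplied it.
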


\begin{proof}
By what we have discussed above, the difference class \eqref{diff} is 
\begin{equation*}
\sum_{x}\Big((-\delta_x + i_x -1)\mathscr{O}_x + (-\tau_x + \alpha_x)\mathscr{O}_x \cdot y\Big)
\end{equation*}
where $\alpha_x= \text{dim}_{\mathbb{C}}(\text{ker}(f_x))-\text{dim}_{\mathbb{C}}(\text{coker}(f_x))$ and $f$ is the natural map $\Omega_D^1 \to \pi_*\Omega_{\tilde{D}}^1$. So what truly requires proving is the equality $\alpha_x = \delta_x$. To do this, let us compute the difference $c_*(1_U) - c(\textup{Der}_X(-\log D))$ by applying the generalised Todd class transformation $\textup{td}_{(1+y)*}$ to \eqref{diff} and substituting $-1$ for the parameter $y$. Because 
\begin{equation*}
\textup{ch}(\mathscr{O}_x) \cdot \textup{td}(TX) \cap [X] = [x]
\end{equation*} 
by GRR, the result before normalisation is 
\begin{equation*}
\sum_{x}\Big((-\delta_x + i_x -1) + (-\tau_x + \alpha_x) \cdot y\Big)[x].
\end{equation*}
Since this class is in $A_0(X) \otimes \mathbb{Q}[y]$, normalisation has no effect on it (cf. section \ref{basic}). Consequently we can directly plug in $-1$ for $y$ in this expression, and get
\begin{equation*}
c_*(1_U) - c(\textup{Der}_X(-\log D)) = \sum_{x}(\tau_x - \delta_x - \alpha_x + i_x -1)[x].
\end{equation*}

On the other hand, by \cite{MR2928937} corollary 3.2, 
\begin{equation*}
c_*(1_U) - c(\textup{Der}_X(-\log D)) = \sum_x (\tau_x - \mu_x)[x].
\end{equation*}

Comparing these equations, we get $\mu_x = \delta_x + \alpha_x -i_x +1$. With the help of Milnor's fomula $\mu_x = 2\delta_x -i_x + 1$, we conclude that $\alpha_x = \delta_x$.
\end{proof}

\begin{remark}
\begin{enumerate}[(i)]

\item Let the parameter $y$ be 0. We see that
\begin{equation*}
mC_0([U \to X]) - \mathscr{O}_X(-D) = Gr_F^{0}(\underline{\Omega}^{\bullet}_{X,D}) - \mathscr{O}_X(-D) = \sum_x (-\delta_x + i_x -1)\mathscr{O}_x.
\end{equation*}
The term $-\delta_x + i_x -1$ can be interpreted as the local difference of the geometric genus and the arithmetic genus of $D$, according to Hironaka's general genus formula (\cite{MR0090850} Theorem $2$).

\item It is desirable if one can show $\alpha_x = \delta_x$ by a direct computation in the local ring. It seems an easy question, but I can't find any reference for it. 

\end{enumerate}
\end{remark}

\section{motivic Chern classes of free hyperplane arrangements}\label{hpa}

In the section we consider hyperplane arrangements of projective spaces. We follow the notation used in \cite{MR3047491}. The fixed ambient nonsingular space $X$ is $\mathbb{P}^n$. We write $\mathscr{A}$ for the hyperplane arrangement (instead of $D$), and $M(\mathscr{A})$ for the complement of the hyperplane arrangement. We also let $\hat{\mathscr{A}}$ be the affine cone of $\mathscr{A}$ in $\mathbb{A}^{n+1}$. We assume moreover that $\hat{\mathscr{A}}$ is a free hyperplane arrangement in $\mathbb{A}^{n+1}$. This is a significantly stronger condition than that $\mathscr{A}$ is a free divisor in $\mathbb{P}^n$, for it indicates that $\textup{Der}_{\mathbb{P}^n}(-\log \mathscr{A})$ splits into a direct sum of line bundles. More precisely, let $h$ be the defining equation for the free hyperplane arrangement $\hat{\mathscr{A}}$, and let $\{e_1, \ldots, e_{n+1} \}$ be its exponents. We have the following exact sequence of $S$-modules ($S=k[x_1, \ldots, x_{n+1}]$)
\begin{equation*}
0 \to S(1-e_1)\oplus\ldots\oplus S(1-e_{n+1}) \to S(1)^{n+1} \to S\slash (h) \otimes S(d) \to S\slash J(h) \otimes S(d) \to 0
\end{equation*}
where $d = \textup{deg}(h)$ is the number of the hyperplanes in $\mathscr{A}$, $J(h) = (\partial_{x_1}h, \ldots, \partial_{x_n}h)$ is the Jacobian ideal of $h$, and $e_1 = 1$. The generator of $S(1-e_1)$ is mapped to the Euler vector field $\sum_ix_i\partial_{x_i}$ in $S(1)^{n+1}$.

Comparing this exact sequence with the exact sequence
\begin{equation*}
0 \to \textup{Der}_X(-\log D) \to TX \to \mathscr{O}_D(D) \to \mathscr{O}_{D^s}(D) \to 0 
\end{equation*}
appeared in the definition of $\textup{Der}_X(-\log D)$, we immediately see that 
\begin{equation}\label{splitder}
\textup{Der}_{\mathbb{P}^n}(-\log \mathscr{A}) \cong \mathscr{O}_{\mathbb{P}^n}(1-e_2) \oplus \ldots \oplus \mathscr{O}_{\mathbb{P}^n}(1-e_{n+1}).
\end{equation}

Let us drop the assumption that $\hat{\mathscr{A}}$ for a moment. The calculation of $mC_y([M(\mathscr{A}) \to \mathbb{P}^{n}])$ follows easily along the line of calculating the Chern-Schwarz-MacPherson class of $M(\mathscr{A})$ performed in \cite{MR3047491} (proof of theorem 3.1). There, it was shown that 
\begin{equation*}
1_{M(\mathscr{A})} = \sum_{x\in L(\hat{\mathscr{A})}}\mu (x) 1_{\underline{x}}
\end{equation*}
where $L(\hat{\mathscr{A})}$ denotes the intersection lattice of $\hat{\mathscr{A}}$, $x$ is any linear subspace of $\mathbb{A}^{n+1}$ belonging to the intersection lattice, $\mu (x)$ is the value of the M\"{o}bius function of the intersection lattice at $x$, and $\underline{x}$ is the corresponding projective space of $x$. This formula can be reinterpreted in our context as
\begin{equation*}
[M(\mathscr{A}) \to \mathbb{P}^n] = \sum_{x\in L(\hat{\mathscr{A})}}\mu (x)[\underline{x} \to \mathbb{P}^n].
\end{equation*}
Now, applying $mC_y$ to this equation, and using the normalisation property of $mC_y$, we obtain that 
\begin{equation*}
mC_y([M(\mathscr{A}) \to \mathbb{P}^n]) = \sum_{x\in L(\hat{\mathscr{A})}}\Big(\mu (x)\sum_{p \geq 0} \Omega_{\underline{x}}^py^p\Big)
\end{equation*}
in $G_0(\mathbb{P}^n) \otimes \mathbb{Z}[y]$.

\begin{lemma}
\begin{enumerate}[(i)]

\item
\begin{equation}\label{mcproj}
\sum_{p=0}^{n} \Omega_{\mathbb{P}^n}^p y^p = \frac{(1+\mathscr{O}_{\mathbb{P}^n}(-1)y)^{n+1}}{1+y}
\end{equation}
\item Let $i: \mathbb{P}^m \to \mathbb{P}^n$ be an inclusion of a linear subspace $\mathbb{P}^m$ into $\mathbb{P}^n$. Then in $G_0(\mathbb{P}^n)$ we have 
\begin{equation}
i_*\mathscr{O}_{\mathbb{P}^m}(k) = (1-\mathscr{O}_{\mathbb{P}^n}(-1))^{n-m} \cdot \mathscr{O}_{\mathbb{P}^n}(k)
\end{equation}
\item
\begin{equation}
i_*(\sum_{p=0}^{m} \Omega_{\mathbb{P}^m}^p y^p) = (1-\mathscr{O}_{\mathbb{P}^n}(-1))^{n-m} \cdot \frac{(1+\mathscr{O}_{\mathbb{P}^n}(-1)y)^{m+1}}{1+y}
\end{equation}

\end{enumerate}
\end{lemma}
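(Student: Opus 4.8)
The plan is to derive all three identities from two standard resolutions on projective space: the Euler sequence and the Koszul resolution of a linear subspace. I will use freely that $G_0(\mathbb{P}^n)$ is a commutative ring with unit $\mathscr{O}_{\mathbb{P}^n}$, that $\mathscr{O}_{\mathbb{P}^n}(-1)$ is a unit so $\mathscr{O}_{\mathbb{P}^n}(-j)=\mathscr{O}_{\mathbb{P}^n}(-1)^j$, that $\Omega^p_{\mathbb{P}^n}=\Lambda^p\Omega^1_{\mathbb{P}^n}$, and that for a short exact sequence $0\to A\to B\to C\to 0$ of locally free sheaves the total $\lambda$-class $\lambda_y(E):=\sum_p\Lambda^pE\,y^p$ is multiplicative, $\lambda_y(B)=\lambda_y(A)\lambda_y(C)$ in $G_0(\mathbb{P}^n)\otimes\mathbb{Z}[y]$ (from the standard filtration on $\Lambda^\bullet B$). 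I will also note at the outset that $1+y$ is a non-zero-divisor in $G_0(\mathbb{P}^n)\otimes\mathbb{Z}[y]$: since $G_0(\mathbb{P}^n)\cong\mathbb{Z}[t]/(t^{n+1})$ and $\mathbb{Z}[t,y]$ is a UFD in which $1+y$ is prime and coprime to $t^{n+1}$, so the fractions appearing in the statement are unambiguous once the corresponding products are verified.

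For (i), I apply $\lambda_y(-)$ to the dualized Euler sequence $0\to\Omega^1_{\mathbb{P}^n}\to\mathscr{O}_{\mathbb{P}^n}(-1)^{\oplus(n+1)}\to\mathscr{O}_{\mathbb{P}^n}\to 0$. Since $\lambda_y(L)=1+Ly$ for a line bundle $L$ and $\lambda_y(\mathscr{O}_{\mathbb{P}^n})=1+y$, multiplicativity gives $(1+\mathscr{O}_{\mathbb{P}^n}(-1)y)^{n+1}=\lambda_y(\Omega^1_{\mathbb{P}^n})\cdot(1+y)$, and $\lambda_y(\Omega^1_{\mathbb{P}^n})=\sum_p\Omega^p_{\mathbb{P}^n}y^p$; dividing by the non-zero-divisor $1+y$ yields \eqref{mcproj}.

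For (ii), I realize $\mathbb{P}^m\subset\mathbb{P}^n$ as the common zero locus of $n-m$ linear forms, a regular sequence of sections of $\mathscr{O}_{\mathbb{P}^n}(1)$; the associated Koszul complex $\Lambda^\bullet(\mathscr{O}_{\mathbb{P}^n}(-1)^{\oplus(n-m)})$ resolves $i_*\mathscr{O}_{\mathbb{P}^m}$, so $i_*\mathscr{O}_{\mathbb{P}^m}=\sum_j(-1)^j\binom{n-m}{j}\mathscr{O}_{\mathbb{P}^n}(-j)=(1-\mathscr{O}_{\mathbb{P}^n}(-1))^{n-m}$ in $G_0(\mathbb{P}^n)$ (equivalently, one inducts on $n-m$, peeling off one hyperplane at a time via $0\to\mathscr{O}(-1)\to\mathscr{O}\to\mathscr{O}_{\mathbb{P}^{n-1}}\to 0$). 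Since $i$ is linear, $i^*\mathscr{O}_{\mathbb{P}^n}(k)=\mathscr{O}_{\mathbb{P}^m}(k)$, so the projection formula gives $i_*\mathscr{O}_{\mathbb{P}^m}(k)=i_*\big(\mathscr{O}_{\mathbb{P}^m}\otimes i^*\mathscr{O}_{\mathbb{P}^n}(k)\big)=(i_*\mathscr{O}_{\mathbb{P}^m})\otimes\mathscr{O}_{\mathbb{P}^n}(k)=(1-\mathscr{O}_{\mathbb{P}^n}(-1))^{n-m}\mathscr{O}_{\mathbb{P}^n}(k)$.

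Finally, for (iii) I combine the two. Multiplying \eqref{mcproj} on $\mathbb{P}^m$ by $1+y$ and using $\mathscr{O}_{\mathbb{P}^m}(-1)=i^*\mathscr{O}_{\mathbb{P}^n}(-1)$ gives $(1+y)\sum_p\Omega^p_{\mathbb{P}^m}y^p=(1+\mathscr{O}_{\mathbb{P}^m}(-1)y)^{m+1}=i^*\big((1+\mathscr{O}_{\mathbb{P}^n}(-1)y)^{m+1}\big)$. Applying the $\mathbb{Z}[y]$-linear map $i_*$ and the projection formula $i_*(i^*\gamma)=(i_*\mathscr{O}_{\mathbb{P}^m})\cdot\gamma$ together with (ii) yields $(1+y)\,i_*\big(\sum_p\Omega^p_{\mathbb{P}^m}y^p\big)=(1-\mathscr{O}_{\mathbb{P}^n}(-1))^{n-m}(1+\mathscr{O}_{\mathbb{P}^n}(-1)y)^{m+1}$, and dividing by $1+y$ gives the claim. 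I expect the only delicate points to be bookkeeping ones: confirming that division by $1+y$ is legitimate in $G_0(\mathbb{P}^n)\otimes\mathbb{Z}[y]$, and keeping in mind that $i_*$ is not a ring homomorphism, so every pushforward of a product must be routed through the projection formula; the geometric input is entirely standard.
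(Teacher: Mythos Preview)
Your proof is correct and follows essentially the same approach as the paper: the Euler sequence plus multiplicativity of $\lambda_y$ for (i), the Koszul resolution for (ii), and the combination of the two for (iii). Your explicit justification that $1+y$ is a non-zero-divisor in $G_0(\mathbb{P}^n)\otimes\mathbb{Z}[y]$ and your routing of pushforwards through the projection formula are slightly more careful than the paper's presentation, but the underlying arguments are the same.
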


\begin{proof}
\begin{enumerate}[(i)] 
\item This follows immediately from the $\lambda$-ring structure of $G_0({\mathbb{P}^{n}})$ and the Euler sequence 
\begin{equation*}
0 \to \Omega_{\mathbb{P}^n}^1 \to \mathscr{O}(-1)^{n+1} \to \mathscr{O} \to 0.
\end{equation*}
Alternatively, this sequence gives us the following relation 
\begin{equation*}
\Omega_{\mathbb{P}^n}^p + \Omega_{\mathbb{P}^n}^{p-1} =  \binom{n+1}{p} \mathscr{O}(-p),
\end{equation*}
as a result of the existence of a certain filtration of $\Lambda^{p}\big(\mathscr{O}(-1)^{n+1}\big)$(\cite{MR801033} page 102). Multiplying $y^p$ to both sides of the equation and summing over $p$, we get
\begin{equation*}
(1+y) \sum_{p \geq 0} \Omega_{\mathbb{P}^n}^p y^p = \sum_{p} \binom{n+1}{p}\mathscr{O}(-p)y^p,
\end{equation*}
which is equivalent to \eqref{mcproj}.

\item Tensoring $\mathscr{O}(k)$ over the Kozul resolution 
\begin{equation*}
0 \to \mathscr{O}_{\mathbb{P}^n}(-(n-m)) \to \ldots \to \mathscr{O}_{\mathbb{P}^n}(-i)^{\binom{n-m}{i}} \to \ldots \to \mathscr{O}_{\mathbb{P}^n} \to \mathscr{O}_{\mathbb{P}^m} \to 0
\end{equation*}
and taking its class in $G_0(\mathbb{P}^n)$, we get
\begin{equation*}
\begin{split}
\mathscr{O}_{\mathbb{P}^m}(k) &= \sum_{i} (-1)^i \binom{n-m}{i} \mathscr{O}_{\mathbb{P}^n}(-i) \cdot \mathscr{O}_{\mathbb{P}^n}(k) \\
&= (1-\mathscr{O}_{\mathbb{P}^n}(-1))^{n-m} \cdot \mathscr{O}_{\mathbb{P}^n}(k).
\end{split}
\end{equation*}

\item According to $(\textup{i})$, we have 
\begin{equation*}
(1+y) \sum_{p=0}^{m} \Omega_{\mathbb{P}^m}^p y^p = \sum_{p} \binom{m+1}{p}\mathscr{O}_{\mathbb{P}^m}(-p)y^p
\end{equation*}
in $G_0({\mathbb{P}^{m}})$. Applying $i_*$ to this equation and using $(\textup{ii})$ yield the result.
\end{enumerate}
\end{proof}

Continue with the calculation of $mC_y([M(\mathscr{A}) \to \mathbb{P}^n])$. With the help of the lemma, we get 
\begin{equation*}
\begin{split}
mC_y([M(\mathscr{A}) \to \mathbb{P}^n]) &= \sum_{x\in L(\hat{\mathscr{A})}} \mu (x)\frac{(\mathscr{O}_{\underline{x}}+\mathscr{O}_{\underline{x}}(-1)y)^{\textup{dim}(x)}}{1+y} \\
&= \sum_{x\in L(\hat{\mathscr{A})}} \mu (x) \frac{(1-\mathscr{O}_{\mathbb{P}^n}(-1))^{n-\textup{dim}(x) +1}(1+\mathscr{O}_{\mathbb{P}^n}(-1)y)^{\textup{dim}(x)}}{1+y} \\
&= \frac{(1-\mathscr{O}_{\mathbb{P}^n}(-1))^{n+1}}{1+y}  \sum_{x\in L(\hat{\mathscr{A})}} \mu (x) \Big(\frac{1+\mathscr{O}_{\mathbb{P}^n}(-1)y}{1-\mathscr{O}_{\mathbb{P}^n}(-1)}\Big)^{\textup{dim}x}
\end{split}
\end{equation*} 

Recall that the characteristic polynomial $\chi_{\hat{\mathscr{A}}}(t)$ of the intersection lattice $L(\hat{\mathscr{A})}$ is defined by 
\begin{equation*}
\chi_{\hat{\mathscr{A}}}(t) =  \sum_{x\in L(\hat{\mathscr{A})}} \mu (x)t^{\textup{dim}(x)}.
\end{equation*}

Using the characteristic polynomial, we get the following expression of $mC_y([M(\mathscr{A}) \to \mathbb{P}^n])$.
\begin{theorem}
Let $\mathscr{A}$ be a hyperplane arrangement in $\mathbb{P}^n$, and let $\chi_{\hat{\mathscr{A}}}$ be the characteristic polynomial of $\hat{\mathscr{A}}$. We have
\begin{equation}
mC_y([M(\mathscr{A}) \to \mathbb{P}^n]) = \frac{(1-\mathscr{O}_{\mathbb{P}^n}(-1))^{n+1}}{1+y} \chi_{\hat{\mathscr{A}}}\Big(\frac{1+\mathscr{O}_{\mathbb{P}^n}(-1)y}{1-\mathscr{O}_{\mathbb{P}^n}(-1)}\Big)
\end{equation}
\end{theorem}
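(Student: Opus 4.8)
The plan is to assemble the formula directly from the two ingredients already in place: the decomposition of $[M(\mathscr{A}) \to \mathbb{P}^n]$ over the intersection lattice proved in \cite{MR3047491}, and the preceding lemma, which computes the motivic Chern class of a linearly embedded projective subspace inside $G_0(\mathbb{P}^n)$. First I would recall, following \cite{MR3047491}, the identity
\[
[M(\mathscr{A}) \to \mathbb{P}^n] = \sum_{x \in L(\hat{\mathscr{A}})} \mu(x)\,[\underline{x} \to \mathbb{P}^n]
\]
in $K_0(var\slash \mathbb{P}^n)$, and apply the natural transformation $mC_y$ to both sides. By functoriality of $mC_y$ under proper pushforward and its normalisation property on the smooth varieties $\underline{x} \cong \mathbb{P}^{\dim x - 1}$, this gives $mC_y([M(\mathscr{A}) \to \mathbb{P}^n]) = \sum_{x} \mu(x)\, i_{x*}\big(\sum_{p \ge 0} \Omega^p_{\underline{x}} y^p\big)$, where $i_x$ denotes the inclusion $\underline{x} \hookrightarrow \mathbb{P}^n$.

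Next I would feed in part (iii) of the lemma, which expresses each term as $i_{x*}\big(\sum_p \Omega^p_{\underline{x}} y^p\big) = (1 - \mathscr{O}_{\mathbb{P}^n}(-1))^{\,n - \dim x + 1}\,\frac{(1 + \mathscr{O}_{\mathbb{P}^n}(-1)y)^{\dim x}}{1+y}$ (here $\dim x$ is the dimension of the linear subspace $x \subset \mathbb{A}^{n+1}$, so $\underline{x}$ has codimension $n - \dim x + 1$). Pulling the common factor $(1 - \mathscr{O}_{\mathbb{P}^n}(-1))^{n+1}/(1+y)$ out of the sum leaves $\sum_{x \in L(\hat{\mathscr{A}})} \mu(x)\big(\tfrac{1 + \mathscr{O}_{\mathbb{P}^n}(-1)y}{1 - \mathscr{O}_{\mathbb{P}^n}(-1)}\big)^{\dim x}$, which by the definition $\chi_{\hat{\mathscr{A}}}(t) = \sum_x \mu(x)t^{\dim x}$ of the characteristic polynomial is exactly $\chi_{\hat{\mathscr{A}}}$ evaluated at $t = \tfrac{1 + \mathscr{O}_{\mathbb{P}^n}(-1)y}{1 - \mathscr{O}_{\mathbb{P}^n}(-1)}$. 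This reproduces the claimed formula; in effect the entire computation has already been carried out in the paragraph preceding the statement, and the theorem is merely its repackaging in terms of $\chi_{\hat{\mathscr{A}}}$.

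The one point I would treat with care — and the only real obstacle — is that the right-hand side as written is a priori only a formal expression, since in $G_0(\mathbb{P}^n) \otimes \mathbb{Z}[y]$ neither $1+y$ nor $1 - \mathscr{O}_{\mathbb{P}^n}(-1)$ is invertible (in fact $(1 - \mathscr{O}_{\mathbb{P}^n}(-1))^{n+1} = 0$), so one must verify that the indicated divisions cancel out. Here one invokes that $\hat{\mathscr{A}}$ is a central arrangement: $\chi_{\hat{\mathscr{A}}}(t)$ has degree $n+1$ and is divisible by $t-1$, say $\chi_{\hat{\mathscr{A}}}(t) = (t-1)q(t)$ with $\deg q = n$. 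Since substituting $t = \tfrac{1 + \mathscr{O}_{\mathbb{P}^n}(-1)y}{1 - \mathscr{O}_{\mathbb{P}^n}(-1)}$ turns $t-1$ into $\tfrac{\mathscr{O}_{\mathbb{P}^n}(-1)(1+y)}{1 - \mathscr{O}_{\mathbb{P}^n}(-1)}$, the numerator absorbs the $1/(1+y)$, and the degree-$n$ factor $q$ brings in only $(1 - \mathscr{O}_{\mathbb{P}^n}(-1))^{-n}$, which is exactly soaked up by the prefactor $(1 - \mathscr{O}_{\mathbb{P}^n}(-1))^{n+1}$; the net result is the honest class $\mathscr{O}_{\mathbb{P}^n}(-1)(1 - \mathscr{O}_{\mathbb{P}^n}(-1))^{n}\,q\big(\tfrac{1 + \mathscr{O}_{\mathbb{P}^n}(-1)y}{1 - \mathscr{O}_{\mathbb{P}^n}(-1)}\big)$ in $G_0(\mathbb{P}^n) \otimes \mathbb{Z}[y]$. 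Equivalently, $\chi_{\hat{\mathscr{A}}}(1) = 0$, so the bracketed sum vanishes at $y = -1$ and is therefore divisible by the monic polynomial $y+1$ over the ring $G_0(\mathbb{P}^n)$. Given that the lattice decomposition and the lemma (which itself only uses the Euler and Koszul sequences) are in hand, this bookkeeping is genuinely the only thing left, and it is mild.
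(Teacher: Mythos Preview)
Your proposal is correct and follows exactly the paper's own argument: the lattice decomposition from \cite{MR3047491}, followed by $mC_y$ and part (iii) of the lemma, then recognition of $\chi_{\hat{\mathscr{A}}}$. Your final paragraph on why the division by $1+y$ and the apparent denominators $1-\mathscr{O}_{\mathbb{P}^n}(-1)$ are harmless is a welcome addition that the paper leaves implicit.
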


This result is true regardless of the freeness of $\mathscr{A}$ (or even $\hat{\mathscr{A}}$), However, when $\hat{\mathscr{A}}$ is a free arrangement with exponent $\{e_1, \ldots, e_{n+1} \}$, Terao's factorisation theorem (\cite{MR1217488} theorem $4.61$) tells us that 
\begin{equation*}
\chi_{\hat{\mathscr{A}}}(t) = (t - e_1) \ldots (t - e_{n+1}).
\end{equation*}

This factorisation implies that

\begin{corol}
Let $\mathscr{A}$ be a hyperplane arrangement in $\mathbb{P}^n$ whose affine cone $\hat{\mathscr{A}}$ is free with exponents $\{e_1, \ldots, e_{n+1} \}$. We have
\begin{equation}\label{mchyper}
mC_y([M(\mathscr{A}) \to \mathbb{P}^n]) = \frac{\displaystyle\prod_{i=1}^{n+1}\Big(1-e_i + (e_i+y)\mathscr{O}_{\mathbb{P}^n}(-1)\Big)}{1+y}.
\end{equation}
\end{corol}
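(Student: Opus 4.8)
The proof will be a one-line substitution dressed up with a little bookkeeping, so the plan is simply to feed Terao's factorisation theorem into the Theorem proved just above. Writing $L = \mathscr{O}_{\mathbb{P}^n}(-1)$ for brevity, that Theorem already expresses
\begin{equation*}
mC_y([M(\mathscr{A}) \to \mathbb{P}^n]) = \frac{(1-L)^{n+1}}{1+y}\,\chi_{\hat{\mathscr{A}}}\Big(\frac{1+Ly}{1-L}\Big),
\end{equation*}
so all that remains is to evaluate $\chi_{\hat{\mathscr{A}}}$ at the argument $\frac{1+Ly}{1-L}$ using the freeness hypothesis.

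First I would recall Terao's factorisation theorem, which for the free arrangement $\hat{\mathscr{A}}$ with exponents $\{e_1,\dots,e_{n+1}\}$ gives $\chi_{\hat{\mathscr{A}}}(t) = \prod_{i=1}^{n+1}(t-e_i)$. Then I would substitute $t = \frac{1+Ly}{1-L}$ factor by factor: each
\begin{equation*}
\frac{1+Ly}{1-L} - e_i = \frac{(1+Ly) - e_i(1-L)}{1-L} = \frac{(1-e_i) + (e_i+y)L}{1-L},
\end{equation*}
so the product acquires a denominator $(1-L)^{n+1}$, which is cancelled exactly by the prefactor $(1-L)^{n+1}$ of the Theorem. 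What survives is $\frac{1}{1+y}\prod_{i=1}^{n+1}\big((1-e_i)+(e_i+y)L\big)$, which is the asserted formula.

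The only point to watch — and it is cosmetic rather than substantial — is that $1-L$ is nilpotent in $G_0(\mathbb{P}^n)$, so ``dividing by $1-L$'' is not literally an operation in $G_0(\mathbb{P}^n)\otimes\mathbb{Z}[y]$. I would handle this exactly as in the derivation of the Theorem: the manipulations above are carried out after multiplying through by $(1-L)^{n+1}$, so that one never actually inverts $1-L$; alternatively one may perform the algebra in the localisation of $G_0(\mathbb{P}^n)\otimes\mathbb{Z}[y]$ at $1-L$ and then observe that the outcome in fact lies in $G_0(\mathbb{P}^n)\otimes\mathbb{Z}[y]$ on the nose, since $e_1=1$ forces the $i=1$ factor to equal $(1+y)L$, which absorbs the remaining $1+y$ in the denominator. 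With this understood there is no genuine obstacle; the content of the corollary is entirely in the already-established Theorem together with Terao's theorem, and the corollary is merely the clean closed form obtained by combining them.
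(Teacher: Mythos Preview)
Your proposal is correct and follows exactly the paper's approach: the corollary is stated immediately after invoking Terao's factorisation theorem $\chi_{\hat{\mathscr{A}}}(t)=\prod_{i=1}^{n+1}(t-e_i)$, and the paper simply says ``This factorisation implies that\ldots'' without writing out the algebra you supply. Your extra remark about the nilpotence of $1-L$ and the cancellation of $1+y$ via the $e_1=1$ factor is a welcome clarification of points the paper leaves implicit.
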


On the other hand, the splitting \eqref{splitder} of $\textup{Der}_{\mathbb{P}^n}(-\log D)$ when $\hat{\mathscr{A}}$ is free implies that 
\begin{equation*}
\Omega_{\mathbb{P}^n}^1(\log D) \cong  \mathscr{O}_{\mathbb{P}^n}(e_2-1) \oplus \ldots \oplus \mathscr{O}_{\mathbb{P}^n}(e_{n+1}-1),
\end{equation*}
and by the $\lambda$-ring structure of $G_0(\mathbb{P}^n)$ we get
\begin{equation*}
\sum_{p\geq 0} \Omega_{\mathbb{P}^n}^p(\log D) y^p = \prod_{i=2}^{n+1} \Big(1+\mathscr{O}_{\mathbb{P}^n}(e_i-1)y \Big).
\end{equation*}
Using the fact that $e_1 = 1$  and $\mathscr{O}_{\mathbb{P}^n}(D) \cong \mathscr{O}_{\mathbb{P}^n}(e_1 + \ldots + e_{n+1})$, we get
\begin{equation}\label{loghyper}
\big(\sum_{p\geq 0} \Omega_{\mathbb{P}^n}^p(\log D) y^p\big) \otimes \mathscr{O}_{\mathbb{P}^n}(-D) = \frac{\displaystyle\prod_{i=1}^{n+1}\Big(\mathscr{O}_{\mathbb{P}^n}(-e_i) + \mathscr{O}_{\mathbb{P}^n}(-1)y\Big)}{1+y}.
\end{equation}

We see that the difference class \eqref{diff} cannot be $0$ unless all the exponents are $1$, a situation which only happens when $\mathscr{A}$ is normal crossing.

Even though \eqref{diff} is nonzero in general, we can nevertheless check by hand that the Chern class formula \eqref{formula} is true when $\hat{\mathscr{A}}$ is free, as a corollary of our computation thus far. In fact, we just need to apply the generalised Todd class transformation $\textup{td}_{(1+y)*}$ to the classes \eqref{mchyper} and \eqref{loghyper} individually, and compare the values of the classes at $y=-1$. For example, 
\begin{equation*}
\begin{split}
\textup{ch}(mC_y([M(\mathscr{A}) \to \mathbb{P}^n])) \cdot \textup{td}(T\mathbb{P}^n) &= \frac{\displaystyle\prod_{i=1}^{n+1} \Big(1-e_i + (e_i+y)e^{-h}\Big)}{1+y} \cdot (\frac{h}{1-e^{-h}})^{n+1} \\
&= \frac{1}{1+y}\prod_{i=1}^{n+1}\frac{h(1 - e_i + e^{-h}(y+e_i))}{1-e^{-h}}
\end{split}
\end{equation*}
where $h = c_1(\mathscr{O}_{\mathbb{P}^n}(1))$. The normalisation of this class is 
\begin{equation*}
 \frac{1}{(1+y)^{n+1}}\prod_{i=1}^{n+1}\Big(\frac{1-e_i + e^{-(1+y)h}(y+e_i)}{1-e^{-(1+y)h}} \cdot h(1+y)\Big) = \prod_{i=1}^{n+1}\Big(\frac{1-e_i + e^{-(1+y)h}(y+e_i)}{1-e^{-(1+y)h}} \cdot h \Big).
\end{equation*}

Calculating the limit $y \to -1$, we obtain the expression
\begin{equation*}
\prod_{i=1}^{n+1} (1+(1-e_i)h)
\end{equation*}
in cohomology. So 
\begin{equation*}
c_*(1_{M(\mathscr{A})}) = \Big(\prod_{i=1}^{n+1} (1+(1-e_i)h) \Big)\cap [\mathbb{P}^n].
\end{equation*}
Performing the same procedure to \eqref{loghyper}, we will obtain 
\begin{equation*}
c(\textup{Der}_{\mathbb{P}^n}(-\log D)) \cap [\mathbb{P}^n] = \Big(\prod_{i=1}^{n+1} (1+(1-e_i)h) \Big)\cap [\mathbb{P}^n]
\end{equation*}
too. The calculation is omitted.

\bibliographystyle{alpha}
\bibliography{liaobib}

\begin{thebibliography}{CMNM02}

\bibitem[AAL77]{MR0463484}
William~A. Adkins, Aldo Andreotti, and J.~V. Leahy.
\newblock An analogue of {O}ka's theorem for weakly normal complex spaces.
\newblock {\em Pacific J. Math.}, 68(2):297--301, 1977.

\bibitem[Alu13]{MR3047491}
Paolo Aluffi.
\newblock Grothendieck classes and {C}hern classes of hyperplane arrangements.
\newblock {\em Int. Math. Res. Not. IMRN}, (8):1873--1900, 2013.

\bibitem[Bit04]{MR2059227}
Franziska Bittner.
\newblock The universal {E}uler characteristic for varieties of characteristic
  zero.
\newblock {\em Compos. Math.}, 140(4):1011--1032, 2004.

\bibitem[BSY10]{MR2646988}
Jean-Paul Brasselet, J{\"o}rg Sch{\"u}rmann, and Shoji Yokura.
\newblock Hirzebruch classes and motivic {C}hern classes for singular spaces.
\newblock {\em J. Topol. Anal.}, 2(1):1--55, 2010.

\bibitem[CMNM02]{MR1931962}
Francisco Calder{\'o}n-Moreno and Luis Narv{\'a}ez-Macarro.
\newblock The module {$\mathscr{D}f^s$} for locally quasi-homogeneous free
  divisors.
\newblock {\em Compositio Math.}, 134(1):59--74, 2002.

\bibitem[Fab15]{MR3319556}
Eleonore Faber.
\newblock Characterizing normal crossing hypersurfaces.
\newblock {\em Math. Ann.}, 361(3-4):995--1020, 2015.

\bibitem[FL85]{MR801033}
William Fulton and Serge Lang.
\newblock {\em Riemann-{R}och algebra}, volume 277 of {\em Grundlehren der
  Mathematischen Wissenschaften [Fundamental Principles of Mathematical
  Sciences]}.
\newblock Springer-Verlag, New York, 1985.

\bibitem[Ful98]{MR1644323}
William Fulton.
\newblock {\em Intersection theory}, volume~2 of {\em Ergebnisse der Mathematik
  und ihrer Grenzgebiete. 3. Folge. A Series of Modern Surveys in Mathematics
  [Results in Mathematics and Related Areas. 3rd Series. A Series of Modern
  Surveys in Mathematics]}.
\newblock Springer-Verlag, Berlin, second edition, 1998.

\bibitem[GS14]{MR3260143}
Michel Granger and Mathias Schulze.
\newblock Normal crossing properties of complex hypersurfaces via logarithmic
  residues.
\newblock {\em Compos. Math.}, 150(9):1607--1622, 2014.

\bibitem[Hir57]{MR0090850}
Heisuke Hironaka.
\newblock On the arithmetic genera and the effective genera of algebraic
  curves.
\newblock {\em Mem. Coll. Sci. Univ. Kyoto. Ser. A. Math.}, 30:177--195, 1957.

\bibitem[Kov11]{MR2784747}
S{\'a}ndor~J. Kov{\'a}cs.
\newblock Du {B}ois pairs and vanishing theorems.
\newblock {\em Kyoto J. Math.}, 51(1):47--69, 2011.

\bibitem[KS11]{MR2796408}
S{\'a}ndor~J. Kov{\'a}cs and Karl~E. Schwede.
\newblock Hodge theory meets the minimal model program: a survey of log
  canonical and {D}u {B}ois singularities.
\newblock In {\em Topology of stratified spaces}, volume~58 of {\em Math. Sci.
  Res. Inst. Publ.}, pages 51--94. Cambridge Univ. Press, Cambridge, 2011.

\bibitem[Lia]{liao}
Xia Liao.
\newblock Chern classes of logarithmic derivations for free divisors with
  jacobian ideal of linear type.
\newblock arXiv:1210.6079.

\bibitem[Lia12]{MR2928937}
Xia Liao.
\newblock Chern classes of logarithmic vector fields.
\newblock {\em J. Singul.}, 5:109--114, 2012.

\bibitem[LS]{sl}
Xia Liao and Mathias Schulze.
\newblock Quasihomogeneous free divisors with only normal crossings in
  codimension one.

\bibitem[LV81a]{MR618807}
John~V. Leahy and Marie~A. Vitulli.
\newblock Seminormal rings and weakly normal varieties.
\newblock {\em Nagoya Math. J.}, 82:27--56, 1981.

\bibitem[LV81b]{MR632650}
John~V. Leahy and Marie~A. Vitulli.
\newblock Weakly normal varieties: the multicross singularity and some
  vanishing theorems on local cohomology.
\newblock {\em Nagoya Math. J.}, 83:137--152, 1981.

\bibitem[MS15]{MR3417881}
Lauren{\c{t}}iu~G. Maxim and J{\"o}rg Sch{\"u}rmann.
\newblock Characteristic classes of singular toric varieties.
\newblock {\em Comm. Pure Appl. Math.}, 68(12):2177--2236, 2015.

\bibitem[OT92]{MR1217488}
Peter Orlik and Hiroaki Terao.
\newblock {\em Arrangements of hyperplanes}, volume 300 of {\em Grundlehren der
  Mathematischen Wissenschaften [Fundamental Principles of Mathematical
  Sciences]}.
\newblock Springer-Verlag, Berlin, 1992.

\bibitem[Pol15]{MR3319132}
Delphine Pol.
\newblock Logarithmic residues along plane curves.
\newblock {\em C. R. Math. Acad. Sci. Paris}, 353(4):345--349, 2015.

\bibitem[Sai80]{MR586450}
Kyoji Saito.
\newblock Theory of logarithmic differential forms and logarithmic vector
  fields.
\newblock {\em J. Fac. Sci. Univ. Tokyo Sect. IA Math.}, 27(2):265--291, 1980.

\bibitem[Yok98]{MR1677403}
Shoji Yokura.
\newblock A singular {R}iemann-{R}och for {H}irzebruch characteristics.
\newblock In {\em Singularities {S}ymposium---\L ojasiewicz 70 ({K}rak\'ow,
  1996; {W}arsaw, 1996)}, volume~44 of {\em Banach Center Publ.}, pages
  257--268. Polish Acad. Sci., Warsaw, 1998.

\end{thebibliography}
  
\end{document}